\newtheorem{prop}{Proposition}
\newtheorem{defi}{Definition}
\newtheorem{coro}{Corollary}
\newtheorem{rem}{Remark}
\newtheorem{theorem}{Theorem}
\def\LL{\left}
\def\RR{\right}
\def\PP{\mathbb{P}}
\def\dom{{\rm Dom}}
\def\Ph{\phantom{\int}\!\!\!\!\!\!\!\!}
\def\ph{\phantom{1^l\!\!\!\!\!\!\!}}
\def\EE{\mathbb{E}}
\def\norm#1{[\hskip-.5pt]#1[\hskip-.5pt]}
\begin{document}

\title{Large time behavior of semilinear stochastic partial differential
equations perturbed by a mixture of Brownian and fractional Brownian motions
}
\author{Marco Dozzi\footnote{corresponding author, marco.dozzi@univ-lorraine.fr, UMR-CNRS 7502, Institut Elie Cartan de Lorraine, Nancy, France} \and Ekaterina T.
Kolkovska\thanks{%
Centro de Investigaci\'{o}n en Matem\'{a}ticas, Guanajuato, Mexico.} \and %
Jos\'{e} A. L\'{o}pez-Mimbela$^{\dag}$ \and Rim Touibi\thanks{UMR-CNRS 7502, Institut Elie Cartan de Lorraine, Nancy, France.}}\date{ }
\maketitle

\begin{abstract}
We study the trajectorywise blowup behavior of a semilinear partial
differential equation that is driven by a mixture of multiplicative Brownian
and fractional Brownian motion, modeling different types of random
perturbations. The linear operator is supposed to have an eigenfunction of
constant sign, and we show its influence, as well as the influence of its
eigenvalue and of the other parameters of the equation, on the occurrence of
a blowup in finite time of the solution. We give estimates for the
probability of finite time blowup and of blowup before a given fixed time.
Essential tools are the mild and weak form of an associated random partial
differential equation.

\textbf{Keywords} Stochastic reaction-diffusion equation; mixed fractional noise; finite-time blowup of trajectories

\textbf{AMS Mathematics Subject Classification} 60H15 60G22 35R60 35B40 35B44 35K58
\end{abstract}

\section{Introduction}

In this paper we study existence, uniqueness and the blowup behavior of
solutions to the fractional stochastic partial differential equation of the
form%
\begin{eqnarray}  \label{2.1}
du(x,t) &=&\left[\frac{1}{2}k^{2}(t)Lu(x,t)+g(u(x,t))\right]%
dt+u(x,t)\,dN_{t},\quad x\in D, \quad t>0,  \notag \\
u(x,0) &=&\varphi (x)\geq 0,  \notag \\
u(x,t) &=&0, \quad x\in \partial D, \quad t \geq 0,
\end{eqnarray}%
where $D\subset
\mathbb{R}
^{d}$ is a bounded Lipschitz domain, $L$ is the infinitesimal generator of a
strongly continuous semigroup {\color{black} of contractions} which satisfies conditions (\ref{P}), (\ref{L}) below,  and $\varphi \in L^{\infty}(D),$ where $L^{\infty
}(D)$ is the space of real-valued essentially bounded functions on $D.$
Additionally, $g$ is a nonnegative locally Lipschitz function and $N$ is a
process given by
\begin{equation}\label{Def-of-N}
N_t= \int_0^t a(s) \,dB(s)+ \int_0^t b(s) \,dB^H(s), \quad t\geq0,
\end{equation}
where $B$ is Brownian motion and $B^{H}$ is fractional Brownian motion with
Hurst parameter $H>1/2$, $a$ is continuous and $b$ is H\"{o}lder continuous
of order $\alpha >1-H.$ Both, $B$ and $B^{H},$ are supposed to be defined on
a filtered probability space $(\Omega ,\mathcal{F},(\mathcal{F}_{t},t\geqq
0),\mathbb{P})$ and adapted to the filtration $(\mathcal{F}_{t},t\geqq 0).$
Such models have recently been studied under the name of `mixed models' in
the context of stochastic differential equations, see \cite{MS} and \cite%
{MRS}. When $N=0,$ $L=\Delta ,$ $k=1,$ $g(u)=u^{1+\beta }$ we obtain the
classical Fujita equation which was studied in \cite{Fuj}. In \cite{DL} and
\cite{ALP} there were considered the cases when $N$ is a Brownian motion, in
\cite{DKL} it was investigated the case when $N$ is a fractional Brownian
motion with Hurst parameter $H>1/2$ and $D\subset\mathbb{R}^{d}$, and in
\cite{D-K-LM-SAA} the case of $H\ge 1/2$ and $D=\mathbb{R}^{d}$.

The fractional Brownian motion (fBm) appears in many stochastic phenomena,
where rough external forces are present. The principal difference, compared
to Brownian motion, is that fBm is not a semimartingale nor a Markov
process, hence classical theory of stochastic integration cannot be applied. Since $H>1/2$, the stochastic integral with respect to $%
B^{H}$ in \eqref{2.1} can be understood as a fractional integral.  Also the presence of both, Brownian and fractional Brownian motion in \eqref{2.1}, due to their
different analytic and probabilistic properties, modelize different aspects
of the random evolution in time of the solution. The factor
$k^2/2$ in front of $L$ affects dissipativity, which in several cases is in
favor of retarding or even preventing blowup.

We consider both, weak and mild solutions of \eqref{2.1}, which we prove are
equivalent and unique. Beyond existence and uniqueness of weak and mild
solutions we are interested in their qualitative behaviour. In Theorem \ref%
{THM1} below we obtain a random time $\tau^*$ which is an upper bound of the
explosion time $\tau.$
 In Theorem \ref{THM7} we obtain a lower bound $%
\tau_* $ of $\tau$ so that a.s.
\begin{equation*}
\tau_*\le \tau\le \tau^*.
\end{equation*}
The random times $\tau_*$ and $\tau^*$ are given by exponential functionals
of the mixture of a Brownian and a fractional Brownian motion. The laws of
such kind of functionals presently are not known.
 In order to study the distribution of $\tau^*$
we use  the well-known  representation  of $B^H$
 in the form $$B_{t}^{H}=\int_{0}^{t}{K^{H}(t,s)\,dW_{s}}, $$ where
 the kernel $K^H$ is given in \eqref{Kernel-K} and
 $W$ is a  Brownian motion defined in the same filtered probability space as $B$.
In general, $W$  can be different from the Brownian motion $B $ appearing in the first integral of \eqref{Def-of-N}. We obtain
estimates of the probability $\mathbb{P}(\tau<\infty) $, and of the tail
distribution of $\tau^*$.
To achieve this we make use of recent results of N.T. Dung \cite{D,DII} from
the Malliavin theory for continuous isonormal Gaussian processes.

  In Theorem \ref{THM2}  we obtain upper bounds for $\PP(\tau^*\le T)$ in the case when $B=W,$ and in Theorem \ref{THM3} when $B$ is independent of $W$, and when $B$ and $W$ are general Brownian motions. In  Theorem \ref{THM4} we obtain lower bounds for $\PP(\tau< \infty) $ when $B=W$.
   As a result in the case when $W=B$ we get specific configurations of the
coefficients $a, b$ and $k$ under which the weak solution (hence also the
mild solution) of equation \eqref{2.1} exhibits finite time blow-up. To
be concrete suppose that $g(z) \geq Cz^{1+\beta}$ for some constants $C>0$, $%
\beta>0$, $B_{t}^{H}=\int_{0}^{t}{K^{H}(t,s)\,dB_{s}}, $
and
\begin{equation*}
\int_0^t a^2(r)\,dr \sim t^{2l}, \quad \int_0^tb^2(r)\,dr\sim t^{2m}, \quad \int_0^t k^2(r)\,dr \sim t^{2p}
\quad%
\mbox{ as }\quad t \to \infty
\end{equation*}
for some nonnegative constants $l,\, m$ and $p$. If {\color{black} $\beta \in (0,1/2)$ and $\max\{p,l\}>
H+m-1/2$, or if $\beta=1/2$ and $p>H+m-1/2$, or if $\beta > 1/2$ } and $p>\max\{l, H+m-1/2\},$ then
all nontrivial positive solutions of (\ref{2.1}) suffer finite-time blowup
with positive probability.

Our approach here is to transform the equation \eqref{2.1} into a random
partial differential equation (RPDE) (\ref{2.2}), whose solution blows up at
the same random time $\tau$ as the solution of \eqref{2.1}, and to work with
this equation. The blowup behavior of (\ref{2.2}) is easier to determine
because $N$ appears as a coefficient, and not as stochastic integrator as in
(\ref{2.1}). Such transformations are indeed known for more general SPDEs
than (\ref{2.1}), including equations whose stochastic term does not depend
linearly on $u$, see \cite{LR}. But for the RPDE's associated to more
general SPDE's it seems difficult to find explicit expressions for upper and
lower bounds for the blowup time, and this is an essential point in our
study. Another reason for having chosen the relatively simple form of %
\eqref{2.1} and \eqref{2.2} is that we consider the blowup trajectorywise
which is a relatively strong notion compared, e.g., to blowup of the moments
of the solution (see, e.g. \cite{Cho}). The crucial ingredient in the proofs
is the existence of a positive eigenvalue and an eigenfunction with constant
sign of the adjoint operator of $L.$ Special attention is given to the case $%
H\in (\frac{3}{4},1)$ because then the process $N$ is equivalent to a
Brownian motion \cite{Ch}. This allows us to apply a result by Dufresne and
Yor \cite{Y} on the law of exponential functionals of the Brownian motion to
get in Theorem \ref{THM5} an explicit lower bound for the probability of
blowup in finite time. \bigskip


We finish this section by introducing some notations and definitions we will
need in the sequel. A stopping time $\tau :\Omega \rightarrow (0,\infty )$
with respect to the filtration $(\mathcal{F}_{t},t\geqq 0)$ is a blowup time
of a solution $u$ of\ \eqref{2.1} if
\begin{equation*}
\limsup_{t\nearrow \tau }\sup_{x\in D} |u(x,t)| =+\infty \quad \mbox{$\mathbb{P}$-a.s.}
\end{equation*}

\noindent 
{\color{black}
Let $(P_{t}^{D},t\geqq 0)$ and $((P^{D})_{t}^{\ast
},t\geqq 0)$
} be the strongly continuous semigroups corresponding to the
operator $L$ and its adjoint $L^*:$
\begin{equation}\label{D}
\int_{D}f(x)P_{t}^{D}g(x)dx=\int_{D}g(x)(P^{D})_{t}^{\ast }f(x)dx,\quad
f,g\in {\color{black}L^2(D).}
\end{equation}
{\color{black}
As usual, $Lf:=\underset{t\rightarrow 0}{\lim }\frac{1}{t}(P_{t}^{D}f-f)$
for all $f\in {\color{black} L^2(D)}$ in the domain of $L,$ denoted by $\mathrm{Dom}(L).$
Due to the Hille-Yosida theorem, $\mathrm{Dom}(L)$ and $\mathrm{Dom}(L^{\ast })$ are dense
in $L^2(D).$
}
Let $P_{t}^{D}(x,\Gamma )$ and ($P^{D})_{t}^{\ast }(x,\Gamma )$ denote the
associated transition functions, where $t> 0,$ $x\in D,$ and $\Gamma \in
\mathcal{B}(D),$ the Borel sets on $D$. In the sequel we will assume that they admit
densities, i.e. there exist families of continuous functions $(p^{D}(t,\cdot
,\cdot ),t>0)$ and $((p^{D})^{\ast }(t,\cdot ,\cdot ),t>0)$ on $D\times D$
such that%
\begin{eqnarray*}
P_{t}^{D}g(x) &=&\int_{D}g(y)P_{t}^{D}(x,dy)=\int_{D}g(y)p^{D}(t,x,y)dy, \\
(P^{D})_{t}^{\ast }f(x) &=&\int_{D}f(y)(P^{D})_{t}^{\ast
}(x,dy)=\int_{D}f(y)(p^{D})^{\ast }(t,x,y)dy.
\end{eqnarray*}
Due to \eqref{D},
\begin{equation}\label{PDt1}
(p^{D})^{\ast }(t,x,y)=p^{D}(t,y,x)\quad\mbox{for all $t > 0$ and $%
x,y\in D.$}
\end{equation}

\section{The weak solution of the associated random partial differential
equation, equivalence with the mild solution}

\label{Section2}

Let us consider the random partial differential equation
\begin{eqnarray}  \label{2.2}
\frac{\partial v}{\partial t}(x,t)&=&\frac{1}{2}k^{2}(t)Lv(x,t)-\frac{1}{2}%
a^{2}(t)v(x,t)+\exp (-N_{t})g(\exp (N_{t})v(x,t)), \\
v(x,0) &=&\varphi (x),\:x\in D,  \notag \\
v(x,t)&=&0, \ t \geq 0,\ x\in \partial{D}.   \notag
\end{eqnarray}
In this section we transform the weak form of \eqref{2.1} into the weak
form of \eqref{2.2} 
using the transformation  $v(x,t)=\exp (-N_{t})u(x,t),\, x\in D,\, t\ge 0. $ Hence, if blowup takes place in finite time, it occurs of course at the same
time and at the same place $x\in D$ for the solutions of both equations.

In the following we write $\left\langle \cdot ,\cdot \right\rangle _{D}$ for
the scalar product in $L^{2}(D).$

\begin{defi}
An $(\mathcal{F}_{t},t\geqq 0)$-adapted random field $v=(v(x,t),$ $t\in
\lbrack 0,T],\ x\in D)$ with values in $L^{2}(D)$ is a weak solution of %
\eqref{2.2} if, for all $t\in $ $[0,T]$ and all $f\in \mathrm{Dom}(L^{\ast })
$, $\mathbb{P}$-a.s.
\begin{eqnarray*}
\left\langle v(\cdot ,t),f\right\rangle _{D} &=&\left\langle \varphi
,f\right\rangle _{D}+\int_{0}^{t}\left( \frac{1}{2}k^{2}(s)\left\langle
v(\cdot ,s),L^{\ast }f\right\rangle _{D}-\frac{1}{2}a^{2}(s)\left\langle
v(\cdot ,s),f\right\rangle _{D}\right) ds
\end{eqnarray*}%
\begin{equation}
+\int_{0}^{t}\exp (-N_{s})\left\langle g(\exp (N_{s})v(\cdot
,s)),f\right\rangle _{D}ds.  \label{weak rpde}
\end{equation}
\end{defi}

Since $g$ is supposed to be locally Lipschitz, a blowup in finite time of $v$
may occur, and the blowup time $\tau $ depends in general on $\omega \in
\Omega .$ A \emph{weak solution of \eqref{2.2} up to $\tau $} is defined as
an $(\mathcal{F}_{t},t\geqq 0)$-adapted random field $v$ that satisfies %
\eqref{weak rpde} for all $t\in (0,T \wedge \tau )$ $\mathbb{P}$-a.s. If $%
\omega $ is such that $v(\omega ,\cdot ,\cdot )$ does not blowup in finite
time, we set $\tau (\omega )=\infty .$

\begin{defi}
An $(\mathcal{F}_{t},t\geqq 0)$-adapted random field $u=(u(x,t),$ $t\in
\lbrack 0,T],x\in D)$ with values in $L^{2}(D)$ is a weak solution of %
\eqref{2.1} up to $\tau $ if, for all $t\in $ $(0,T\wedge \tau )$ and all $%
f\in \mathrm{Dom}(L^{\ast }),$ $\mathbb{P}$-a.s.
\begin{equation*}
(i)\text{ }\int_{0}^{t}a^{2}(s)\left[1+\left\langle u(\cdot
,s),f\right\rangle _{D}^{2}\right]\,ds<\infty ,\quad b(\bullet )\left\langle
u(\cdot ,\bullet ),f\right\rangle _{D}\in \mathcal{C}^{\beta }[0,t] \mbox{
for some $\beta >1-H,$}
\end{equation*}
\begin{equation*}
(ii)\text{ \ }\int_{0}^{t}\left( k^{2}(s)\left\vert \left\langle u(\cdot
,s),L^{\ast }f\right\rangle _{D}\right\vert +\left\vert \left\langle
g(u(\cdot ,s)),f\right\rangle _{D}\right\vert \right)\, ds<\infty , %
\phantom{XXXXXXxXXXXXXX}
\end{equation*}
and%
\begin{equation*}
\left\langle u(\cdot ,t),f\right\rangle _{D}=\left\langle \varphi
,f\right\rangle _{D}+\int_{0}^{t}\left( \frac{1}{2}k^{2}(s)\left\langle
u(\cdot ,s),L^{\ast }f\right\rangle _{D}+\left\langle g(u(\cdot
,s),f\right\rangle _{D}\right) ds
\end{equation*}%
\begin{equation}
+\int_{0}^{t}\left\langle u(\cdot ,s),f\right\rangle _{D}dN_{s}.
\label{weak spde}
\end{equation}
\end{defi}

\noindent Conditions (i) and (ii) in the above definition are sufficient for
the It\^{o}, the fractional and the Lebesgue integrals in \eqref{weak spde}
to be well defined $\mathbb{P}$-a.s.

We proceed now to the relation between \eqref{weak spde} and \eqref{weak
rpde}.

\begin{prop}
\label{PROP1} If $u$ is a weak solution of \eqref{2.1} up to a random time $%
\tau $, then $v(x,t)=\exp (-N_{t})u(x,t)$ is a weak solution of \eqref{2.2}
up to $\tau $, and viceversa.
\end{prop}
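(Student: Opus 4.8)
The plan is to reduce the infinite-dimensional statement to a one-dimensional stochastic calculus computation by testing against a fixed $f\in\mathrm{Dom}(L^{\ast})$ and then applying an It\^o formula to the exponential of the mixed process $N$. First I would fix $f$ and set $Y_t:=\langle u(\cdot,t),f\rangle_D$ and $Z_t:=\langle v(\cdot,t),f\rangle_D$. Since $v(x,t)=\exp(-N_t)u(x,t)$ and the factor $\exp(-N_t)$ is independent of $x$, linearity of the inner product gives $Z_t=\exp(-N_t)\,Y_t$. The weak form \eqref{weak spde} states precisely that, up to $\tau$,
\begin{equation*}
dY_t=\Big(\tfrac{1}{2}k^2(t)\langle u(\cdot,t),L^{\ast}f\rangle_D+\langle g(u(\cdot,t)),f\rangle_D\Big)\,dt+Y_t\,dN_t,
\end{equation*}
so the whole question reduces to transforming this scalar equation through the multiplier $\exp(-N_t)$.

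The key analytic input is that, because $H>1/2$, the fractional integral $\int_0^{\cdot}b(s)\,dB^H(s)$ is a pathwise (Young) integral of zero quadratic variation. Writing $N_t=M_t+A_t$ with $M_t=\int_0^t a(s)\,dB(s)$ and $A_t=\int_0^t b(s)\,dB^H(s)$, one then has $\langle N\rangle_t=\langle M\rangle_t=\int_0^t a^2(s)\,ds$ and no cross-variation between $M$ and $A$. Applying the It\^o formula for such a mixed process to $\exp(-N_t)$ yields
\begin{equation*}
d\big(\exp(-N_t)\big)=-\exp(-N_t)\,dN_t+\tfrac{1}{2}a^2(t)\exp(-N_t)\,dt .
\end{equation*}
I would then apply the integration-by-parts rule to $Z_t=\exp(-N_t)Y_t$, where the only nonzero covariation comes from the two Brownian parts, namely $d\langle\exp(-N),Y\rangle_t=-a^2(t)\exp(-N_t)Y_t\,dt$.

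Collecting the three contributions, the two $\exp(-N_t)Y_t\,dN_t$ terms cancel, while the $\tfrac{1}{2}a^2\exp(-N)Y$ term produced by $d(\exp(-N))$ combines with the covariation term to give $-\tfrac{1}{2}a^2(t)\exp(-N_t)Y_t\,dt=-\tfrac{1}{2}a^2(t)Z_t\,dt$. Using $\exp(-N_t)\langle u(\cdot,t),L^{\ast}f\rangle_D=\langle v(\cdot,t),L^{\ast}f\rangle_D$ and $u=\exp(N)v$, so that $\exp(-N_t)g(u(\cdot,t))=\exp(-N_t)g(\exp(N_t)v(\cdot,t))$, the differential equation for $Z_t$ integrates to exactly \eqref{weak rpde}; using $Z_0=\langle\varphi,f\rangle_D$ then proves the forward direction. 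The converse is the symmetric computation applied to $u(x,t)=\exp(N_t)v(x,t)$, starting from $d(\exp(N_t))=\exp(N_t)\,dN_t+\tfrac{1}{2}a^2(t)\exp(N_t)\,dt$.

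The hard part will be the rigorous justification of this It\^o and integration-by-parts calculus for the non-semimartingale $N$: one must invoke the version valid for $H>1/2$ in which the $B^H$-integral is a Young integral contributing neither a second-order term nor any cross-variation with the Brownian part, and one must localize to handle the merely locally Lipschitz $g$ together with the fact that the identities are asserted only up to the blowup time $\tau$. Verifying that the integrability and H\"older conditions (i)--(ii) transfer between the two weak formulations, so that all the Lebesgue, It\^o and fractional integrals remain well defined up to $\tau$, is then routine given the continuity and local boundedness of the solutions before $\tau$.
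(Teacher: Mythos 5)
Your proposal is correct and takes essentially the same route as the paper: the paper applies a mixed It\^o formula (from Mishura's book) to the five-variable function $F(y_1,\ldots,y_5)=e^{-y_1-y_3}(y_2+y_4+y_5)$ evaluated at the It\^o, Young and Lebesgue integrals making up $e^{-N_t}\langle u(\cdot,t),f\rangle_D$, which is exactly your ``It\^o for $e^{-N_t}$ plus integration by parts'' computation in a single step, with the same key inputs (the $B^H$-integrals contribute no quadratic or cross variation, and the surviving second-order terms produce the $-\tfrac12 a^2 Z_t\,dt$ drift). The transfer of the integrability and H\"older conditions (i)--(ii) that you defer is handled in the paper in the remark preceding the proof, so nothing essential is missing.
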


\begin{rem}
We notice that $\left\langle v(\cdot ,s),f\right\rangle _{D}$ is
absolutely continuous in $s$ if $v$ is a weak solution of \eqref{2.2}. With
the choice $u(x,t):=\exp(N_{t})v(x,t)$ condition (i) is satisfied. In fact,
for $t<T\wedge \tau (\omega ),$
\begin{equation*}
\int_{0}^{t}\left\langle u(\cdot ,s),f\right\rangle
_{D}a(s)\,dB_{s}=\int_{0}^{t}\left\langle v(\cdot ,s),f\right\rangle
_{D}\exp (N_{s})a(s)\,dB_{s}
\end{equation*}%
is well defined since $\int_{0}^{t}(\int_{D}v(x,s)f(x)\,dx)^{2}\exp
(2N_{s})a^{2}(s)\,ds<\infty$ \ $\mathbb{P}$-a.s.

Recall that the fractional integral $\int_0^Tf(x)dg(x)$ is defined (in the sense of Zähle \cite{Z}) in \cite[Def. 2.1.1]{M} for $f,g$ belonging to fractional Sobolev spaces. If $0<\varepsilon<H$, $f$ and $g$ are Hölder continuous of exponents $\alpha$ and $H-\varepsilon$ respectively, and $\alpha+H-\varepsilon>1$, this fractional
integral
coincides with the corresponding generalized Riemann-Stieltjes integral; see \cite[Thm. 2.1.7]{M}.
Hence, the fractional integral
\begin{equation}
\int_{0}^{t}\left\langle u(\cdot ,s),f\right\rangle
_{D}b(s)\,dB_{s}^{H}=\int_{0}^{t}\left\langle v(\cdot ,s),f\right\rangle
_{D}\exp (N_{s})b(s)\,dB_{s}^{H}  \label{frac int}
\end{equation}
is well defined  for $t<T\wedge \tau (\omega )$
because, on the one hand,  $N_{\cdot }=\int_{0}^{\cdot }(a(s)\,dB_{s}+b(s)\,dB_{s}^{H})$ is $%
\mathbb{P}$-a.s. H\"{o}lder continous of order $1/2-\epsilon $ for all $%
\epsilon >0$ by the theorem of Kolmogorov and \cite[Proposition 4.1]{NR}. On the other hand
 $b(\cdot)$ is $\alpha$-Hölder continuous (with $\alpha>1-H$) and $B^H$ is Hölder continuous with exponent $H-\varepsilon$ for any $\varepsilon>0$. Hence, choosing $\varepsilon <\min\{H/2-1/4, \alpha +H-1\}$ we get that the
 integrand on the right side of \eqref{frac int} is H\"{o}lder
continuous of order $\min\{\alpha,1/2-\varepsilon\}$, and therefore $H-\varepsilon +\min\{\alpha, 1/2 - \varepsilon\}>1$ and the integral is well defined as
a generalized Riemann-Stieltjes integral.

\end{rem}
\begin{proof}
Let $T>0.$ It suffices to prove the assertion for $t\in (0,T\wedge \tau $)$.$
We apply (a slight generalisation of) the It\^{o} formula in \cite[page 184]{M}.
Let
\begin{eqnarray*}
Y_{t}^{1} &=&\int_{0}^{t}a(s)\,dB_{s}\text{ },\text{ \ }Y_{t}^{2}=\int_{0}^{t}%
\left\langle u(\cdot ,s),f\right\rangle _{D}a(s)\,dB_{s}, \\
Y_{t}^{3} &=&\int_{0}^{t}b(s)\,dB_{s}^{H},\text{\ \ }Y_{t}^{4}=\int_{0}^{t}%
\left\langle u(\cdot ,s),f\right\rangle _{D}b(s)\,dB_{s}^{H},\text{ } \\
Y_{t}^{5} &=&\left\langle \varphi ,f\right\rangle _{D}+\int_{0}^{t}\left(
\frac{1}{2}k^{2}(s)\left\langle u(\cdot ,s),L^{\ast }f\right\rangle
_{D}+\left\langle g(u(\cdot ,s)),f\right\rangle _{D}\right) ds,
\end{eqnarray*}%
and let $F(y_{1},y_{2},y_{3},y_{4},y_{5})=\exp
(-y_{1}-y_{3})(y_{5}+y_{2}+y_{4}). $ Then
\[
F(Y_{t}^{1},\ldots,Y_{t}^{5})=\exp (-N_{t})\left\langle u(\cdot
,t),f\right\rangle _{D}=\left\langle v(\cdot ,t),f\right\rangle _{D}.
\]%
The above mentioned It\^{o} formula then reads
\begin{eqnarray*}\lefteqn{
F(Y_{t}^{1},\ldots,Y_{t}^{5})}\\
&=&F(Y_{0}^{1},\ldots,Y_{0}^{5})+\sum_{i=1}^{5}\int_{0}^{t}\frac{\partial F}{%
\partial y_{i}}(Y_{s}^{1},\ldots,Y_{s}^{5})\,dY_{s}^{i}
+\frac{1}{2}\sum_{i,j=1}^{2}\int_{0}^{t}\frac{\partial ^{2}F}{\partial
y_{i}\partial y_{j}}(Y_{s}^{1},\ldots,Y_{s}^{5})\,d\left\langle
Y_{s}^{i},Y_{s}^{j}\right\rangle .
\end{eqnarray*}%
Since $u$ is a weak solution of \eqref{2.1},
\begin{eqnarray*}
\left\langle v(\cdot ,t),f\right\rangle _{D} &=&\left\langle \varphi
,f\right\rangle _{D}-\int_{0}^{t}\exp (-N_{s})\left\langle u(\cdot
,s),f\right\rangle _{D}\LL(a(s)\,dB_{s}+b(s)\,dB_{s}^{H}\RR) \\
&&+\int_{0}^{t}\exp (-N_{s})\left\langle u(\cdot ,s),f\right\rangle
_{D}\LL(a(s)dB_{s}+b(s)dB_{s}^{H}\RR) \\
&&+\int_{0}^{t}\exp (-N_{s})\left( \frac{1}{2}k^{2}(s)\left\langle u(\cdot
,s),L^{\ast }f\right\rangle _{D}+\left\langle g(u(\cdot ,s)),f\right\rangle
_{D}\right) ds \\
&&-\frac{1}{2}\int_{0}^{t}\exp (-N_{s})\left\langle u(\cdot
,s),f\right\rangle _{D}a^{2}(s)ds \\
&=&\left\langle \varphi ,f\right\rangle _{D}+\int_{0}^{t}\left( \frac{1}{2}%
k^{2}(s)\left\langle v(\cdot ,s),L^{\ast }f\right\rangle _{D}-\frac{1}{2}%
a^{2}(s)\left\langle v(\cdot ,s),f\right\rangle _{D}\right) \,ds \\
&&+\int_{0}^{t}\exp (-N_{s})\left\langle g(\exp (N_{s})v(\cdot
,s)),f\right\rangle _{D}\,ds.
\end{eqnarray*}
Therefore $v$ is a weak solution of \eqref{2.2}. Similarly we obtain the viceversa result. \end{proof}

In order to define the mild solutions of equations \eqref{2.1} and  %
\eqref{2.2} we define first the evolution families of contractions
corresponding to the generator $\frac{1}{2}k^{2}(t)L.$ For $0\le s<t$ let
\begin{equation}  \label{AK}
K(t,s)=\frac{1}{2}\int_{s}^{t}k^{2}(r)\,dr,\quad A(t,s)=\frac{1}{2}%
\int_{s}^{t}a^{2}(r)\,dr,\quad K(t)=K(t,0),\quad A(t)=A(t,0),
\end{equation}
and set $p^{D}(s,x;t,y)=p^{D}(K(t,s),x,y),$ $x,y\in D\times D,$ $0\leqq s<t.$
For $f\in L^{2}(D)$ the corresponding evolution families of contractions on $%
L^{2}(D)$ are given by
\begin{eqnarray*}
U^{D}(t,s)f(x) &=&\int_{D}p^{D}(s,x;t,y)f(y)dy=P_{K(t,s)}^{D}f(x), \\
\text{\ }(U^{D})^{\ast }(t,s)f(x)
&=&\int_{D}p^{D}(s,y;t,x)f(y)dy=(P^{D})_{K(t,s)}^{\ast }f(x).\text{ }
\end{eqnarray*}

\begin{defi}
An $(\mathcal{F}_{t},t\geqq 0)$-adapted random field $v=(v(x,t),$ $t\geqq
0,\ x\in D)$ with values in $L^{2}(D)$ is a mild solution of~\eqref{2.2} on $%
[0,T]$ if, for all $t\in $ $[0,T],$ $\mathbb{P}$-a.s.
\begin{eqnarray*}
v(x,t) &=&U^{D}(t,0)\varphi (x)-\frac{1}{2}%
\int_{0}^{t}a^{2}(s)U^{D}(t,s)v(x,s)\,ds \\
&&+\int_{0}^{t}\exp (-N_{s})U^{D}(t,s)\left(g((\exp N_{s})%
\phantom{1^l\!\!\!\!\!\!\!}\:\: v(x,s))\right)\,ds.\text{ }
\end{eqnarray*}
\end{defi}

\begin{prop}
\label{Proposition2} The mild form of~\eqref{2.2} can be written as
\begin{eqnarray}
v(x,t)&=&\exp (-A(t))U^{D}(t,0)\varphi (x)  \notag \\
&+&\int_{0}^{t} \exp (-N_{s}-A(t,s))U^{D}(t,s)g(\exp (N_{s})v(\cdot
,s))(x)ds,\qquad  \label{ms}
\end{eqnarray}
where $A(t,s)$ and $A(t)$ are given in \eqref{AK}.
\end{prop}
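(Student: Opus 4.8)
The plan is to read the defining mild form of \eqref{2.2} as a linear Volterra integral equation for $v$, in which the nonlinear term $\exp(-N_s)g(\exp(N_s)v(\cdot,s))$ plays the role of a source (fixed once the path $v$ is fixed) and the term $-\tfrac12 a^2(s)v$ is a \emph{scalar} multiplicative potential. The essential observation is that multiplication by the $x$-independent factor $-\tfrac12 a^2(s)$ commutes with the evolution operators $U^{D}(t,s)$, which act only in the space variable; consequently this linear feedback can be resolved explicitly by an integrating-factor (variation-of-constants) computation, and the associated resolvent weight is exactly $\exp(-A(t,s))$. I would therefore verify directly that the right-hand side of \eqref{ms} solves the defining integral equation, and conversely.

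Concretely, writing $\tilde g(\cdot,s):=\exp(-N_s)g(\exp(N_s)v(\cdot,s))$, the defining mild form reads $v(\cdot,t)=U^{D}(t,0)\varphi-\tfrac12\int_0^t a^2(s)U^{D}(t,s)v(\cdot,s)\,ds+\int_0^t U^{D}(t,s)\tilde g(\cdot,s)\,ds$, while \eqref{ms} reads $v(\cdot,t)=\exp(-A(t))U^{D}(t,0)\varphi+\int_0^t\exp(-A(t,s))U^{D}(t,s)\tilde g(\cdot,s)\,ds$. I would substitute the latter candidate into the right-hand side of the former. The first step uses the evolution (Chapman--Kolmogorov) property $U^{D}(t,s)U^{D}(s,r)=U^{D}(t,r)$, which holds because $K(t,s)+K(s,r)=K(t,r)$, to collapse $U^{D}(t,s)v(\cdot,s)$ into terms proportional to $U^{D}(t,0)\varphi$ and to $U^{D}(t,r)\tilde g(\cdot,r)$. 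Next, Fubini's theorem turns the resulting double time integral over $\{0\le r\le s\le t\}$ into an inner integral in $s$, and the fundamental theorem of calculus, together with $\partial_s\exp(-A(s))=-\tfrac12 a^2(s)\exp(-A(s))$, $\partial_s\exp(-A(s,r))=-\tfrac12 a^2(s)\exp(-A(s,r))$, and the normalizations $A(0)=0$, $A(r,r)=0$, evaluates these integrals as $\exp(-A(t))-1$ and $\exp(-A(t,r))-1$ respectively. Collecting terms, the two contributions equal to $-1$ cancel the surviving copies of $U^{D}(t,0)\varphi$ and of $\int_0^t U^{D}(t,r)\tilde g(\cdot,r)\,dr$, and one recovers precisely the right-hand side of \eqref{ms}.

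Since every manipulation above (interchange of the scalar factor with $U^{D}$, Fubini, the fundamental theorem of calculus) is reversible, the two integral equations are equivalent; together with uniqueness of the solution of the linear Volterra equation this yields both implications of the Proposition. The only points requiring care---rather than genuine difficulty---are the justification of Fubini's theorem and of interchanging $U^{D}(t,s)$ with the time integral: on any interval $[0,t]$ with $t<T\wedge\tau(\omega)$ the integrand $\tilde g(\cdot,s)$ is continuous and $L^{2}(D)$-bounded below the blowup time, the weights $\exp(-A(\cdot,\cdot))$ are bounded, and $U^{D}(t,s)$ is a contraction, so these interchanges are routine. The conceptual content of the proof is thus entirely contained in the first paragraph's observation that the $a^{2}$-term is a commuting scalar potential whose resolvent is the explicit exponential weight.
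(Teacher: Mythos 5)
Your argument is correct, but it follows a genuinely different route from the paper's. The paper proves \eqref{ms} by the change of unknown $w(x,t)=\exp(A(t))v(x,t)$: it differentiates $\langle w(\cdot,t),f\rangle_D$ in time, observes that the $-\tfrac12 a^2 v$ term cancels so that $w$ is a weak solution of an RPDE with no linear damping, writes down the mild form of that transformed equation, and then substitutes back $v=\exp(-A(t))w$. That argument is short but implicitly routes through the weak formulation and the weak--mild correspondence (Theorem \ref{THM8}, which in the paper is only established afterwards), and it requires differentiability of $t\mapsto\langle v(\cdot,t),f\rangle_D$. You instead stay entirely at the level of the integral equations: you treat $-\tfrac12 a^2(s)v$ as a commuting scalar potential, substitute the candidate \eqref{ms} into the defining mild form, and use $U^{D}(t,s)U^{D}(s,r)=U^{D}(t,r)$ (via $K(t,s)+K(s,r)=K(t,r)$), Fubini on $\{0\le r\le s\le t\}$, and $\partial_s e^{-A(s,r)}=-\tfrac12 a^2(s)e^{-A(s,r)}$ to produce the factors $e^{-A(t)}-1$ and $e^{-A(t,r)}-1$ that cancel the untwisted terms; uniqueness of the frozen-source linear Volterra equation (Gronwall, since $a$ is continuous and $U^{D}$ is a contraction) then gives both implications. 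What your approach buys is self-containedness and slightly weaker regularity demands --- no time differentiation, no appeal to the weak form; what the paper's approach buys is brevity and a transparent explanation of where the integrating factor $e^{-A(t)}$ comes from at the PDE level. One stylistic caveat: the phrase ``every manipulation is reversible'' is not by itself a proof of the converse implication; it is the uniqueness of the linear Volterra equation with the nonlinearity frozen as a source --- which you do invoke --- that actually closes both directions, so that step should carry the weight explicitly.
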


\begin{rem}
Since $g$ and $\varphi$ are supposed to be nonnegative,
\begin{equation*}
v(x,t)\ge \exp (-A(t))U^{D}(t,0)\varphi (x)\ge0 \mbox{ for all $x\in D$ and
$t\ge 0.$}
\end{equation*}
\end{rem}

\begin{proof}
Let $w(x,t)=\exp (A(t))v(x,t).$ For $f\in L^2(D),$ we get from
the definition of the mild solution%
\begin{eqnarray*}
\frac{d}{dt}\langle w(\cdot ,t),f\rangle _{D} &=&\frac{1}{2}a^{2}(t)\exp
(A(t))\langle v(\cdot ,t),f\rangle _{D}+\exp (A(t))\frac{d}{dt}\langle
v(\cdot ,t),f\rangle _{D} \\
&=&\frac{1}{2}a^{2}(t)\exp (A(t))\langle v(\cdot ,t),f\rangle _{D} \\
&&+\exp (A(t))\left( \frac{1}{2}k^{2}(t)\left\langle v(\cdot ,t),L^{\ast
}f\right\rangle _{D}-\frac{1}{2}a^{2}(t)\left\langle v(\cdot
,t),f\right\rangle _{D}\right) \\
&&+\exp (A(t))\exp (-N_{t})\left\langle g(\exp (N_{t})v(\cdot
,t)),f\right\rangle _{D} \\
&=&\frac{1}{2}\exp (A(t))k^{2}(t)\left\langle v(\cdot ,t),L^{\ast
}f\right\rangle _{D}+\exp (A(t)-N_{t})\left\langle g(\exp (N_{t})v(\cdot
,t)),f\right\rangle _{D} \\
&=&\frac{1}{2}k^{2}(t)\left\langle w(\cdot ,t),L^{\ast }f\right\rangle
_{D}+\exp (A(t)-N_{t})\left\langle g(\exp (N_{t}-A(t))w(\cdot
,t)),f\right\rangle _{D},
\end{eqnarray*}
with boundary conditions $w(x,0)=\varphi (x)$ for $x\in D$ and $w(x,t)=0$
for $x\in \partial D.$ Therefore $w$ is a weak solution of the RPDE formally
given by%
\begin{equation*}
\frac{d}{dt}w(x,t)=\frac{1}{2}k^{2}(t)Lw(x,t)+\exp (A(t)-N_{t})g(\exp
(N_{t}-A(t))w(x,t)).
\end{equation*}
By the definition of the mild solution%
\begin{equation*}
w(x,t)=U^{D}(t,0)\varphi (x)+\int_{0}^{t}\exp (A(s)-N_{s})U^{D}(t,s)g(\exp
(N_{s}-A(s))w(\cdot ,s))(x)\,ds.
\end{equation*}
Consequently,%
\begin{eqnarray*}
v(x,t) &=&\exp (-A(t))w(x,t) \\
&=&\exp (-A(t))U^{D}(t,0)\varphi (x)+\int_{0}^{t}ds\exp
(-A(t,s)-N_{s})U^{D}(t,s)g(\exp (N_{s})v(\cdot ,s))(x).
\end{eqnarray*}
\end{proof}


{\color{black}
\begin{theorem}
\label{PROP3} The equation \eqref{ms} has a unique non-negative local mild
solution, i.e. there exists $t>0$ such that \eqref{ms} has a mild solution
in $L^{\infty }([0,t[\times D).$
\end{theorem}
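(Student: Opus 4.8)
The plan is to recast \eqref{ms} as a fixed-point equation for a nonlinear integral operator and apply the Banach contraction principle on a closed ball of $L^{\infty}([0,t)\times D)$, working pathwise, i.e.\ for a fixed $\omega$ outside a $\PP$-null set. The three structural facts I will exploit are: that the evolution family $U^{D}(t,s)$ inherits from the underlying Dirichlet (sub-Markovian) semigroup the property of being a positivity-preserving contraction on $L^{\infty}(D)$; that $A(t,s)\ge 0$, so $\exp(-A(t,s))\le 1$; and that for $\PP$-a.e.\ $\omega$ the trajectory $s\mapsto N_{s}$ is continuous, hence $M:=\sup_{s\in[0,T]}|N_{s}|<\infty$. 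This last fact is precisely what makes the construction trajectorywise.

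First I would fix $T>0$ and such an $\omega$, and for a horizon $t\le T$ still to be chosen define
$$
(\mathcal{T}v)(x,t)=\exp(-A(t))U^{D}(t,0)\varphi(x)+\int_{0}^{t}\exp(-N_{s}-A(t,s))U^{D}(t,s)g(\exp(N_{s})v(\cdot,s))(x)\,ds
$$
on the set $\mathcal{B}^{+}_{R}=\{v\in L^{\infty}([0,t)\times D):0\le v,\ \|v\|_{\infty}\le R\}$, which is a closed subset of the Banach space $L^{\infty}([0,t)\times D)$. I set $R:=\|\varphi\|_{\infty}+1$. Since $g$ is nonnegative and locally Lipschitz, it is bounded by some constant $G$ and Lipschitz with some constant $L_{g}$ on the compact interval $[0,e^{M}R]$, which contains the range of $\exp(N_{s})v(x,s)$ for every $v\in\mathcal{B}^{+}_{R}$.

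Next I would verify the two hypotheses of the contraction principle. For the self-mapping property I use $U^{D}(t,s)\mathbf{1}\le\mathbf{1}$ and $\exp(-A(t))\le 1$ to get $\|\exp(-A(t))U^{D}(t,0)\varphi\|_{\infty}\le\|\varphi\|_{\infty}$, while the integral term is bounded by $e^{M}G\,t$; hence $\|\mathcal{T}v\|_{\infty}\le\|\varphi\|_{\infty}+e^{M}G\,t\le R$ as soon as $e^{M}G\,t\le 1$. Nonnegativity of $\mathcal{T}v$ is immediate from $\varphi\ge0$, $g\ge0$, and the positivity of $U^{D}(t,s)$ and of the exponential factors, so $\mathcal{T}$ maps $\mathcal{B}^{+}_{R}$ into itself. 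For the contraction property the local Lipschitz bound gives $|g(\exp(N_{s})v_{1})-g(\exp(N_{s})v_{2})|\le L_{g}e^{M}|v_{1}-v_{2}|$, whence $\|\mathcal{T}v_{1}-\mathcal{T}v_{2}\|_{\infty}\le L_{g}e^{2M}t\,\|v_{1}-v_{2}\|_{\infty}$; choosing $t$ with $L_{g}e^{2M}t<1$ makes $\mathcal{T}$ a contraction. The Banach fixed-point theorem then yields a unique $v\in\mathcal{B}^{+}_{R}$ solving \eqref{ms} on $[0,t)$; adaptedness of $v$ follows since the Picard iterates $\mathcal{T}^{n}0$ are adapted and converge uniformly, and uniqueness among \emph{all} nonnegative mild solutions follows by observing that any such solution lies in some ball on a short interval and applying the same Lipschitz estimate with Gronwall's inequality.

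The main obstacle I anticipate is not the contraction estimate itself but justifying the two $L^{\infty}$ mapping properties of $U^{D}(t,s)$—that it is a contraction and positivity-preserving on $L^{\infty}(D)$—from the standing assumptions; this is where the sub-Markovian nature of the Dirichlet transition densities $p^{D}$ is needed, and it is exactly what permits the argument to run in $L^{\infty}$ rather than in $L^{2}$. A secondary point demanding care is that the admissible threshold on $t$ depends on $M=M(\omega)$ through $e^{M}$ and $e^{2M}$, so the length of the existence interval is genuinely random; this is consistent with the pathwise formulation and causes no difficulty once $\omega$ is fixed.
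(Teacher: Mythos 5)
Your proposal is correct and follows essentially the same route as the paper: a pathwise Banach fixed-point argument for the integral operator on a closed ball of nonnegative functions in $L^{\infty}([0,t)\times D)$, with the random constant $\sup_{s\le T}e^{|N_s|}$ (the paper's $A_T$, your $e^{M}$) controlling both the self-mapping and the contraction estimates, and the $L^{\infty}$-contractivity of $U^{D}(t,s)$ together with $e^{-A(t,s)}\le 1$ doing the rest. The only differences are cosmetic choices of the radius $R$ and of the smallness threshold for $t$.
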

\begin{proof}
Let  $T>0$ and denote
$
E_T=\LL\{ v:[0,T]\times D \to L^{\infty }(D) : \norm{v} <\infty \RR \},
$ where $$\norm{v}:=\sup_{0\leq t\leq T}\| v(t, \cdot)\| _{\infty}. $$ Let $P_T=\{    v\in E_T: v\geq0\} $ and for $R>0$ let $C_R=\{v\in E_T : \norm{v} \leq R \}. $
Then $E_T$ is a Banach space and $P_T$ and $C_R$ are closed subsets of $E_T$.  Let us now define
$$\psi(v)(t,x)=
e^{-A(t)}U^{D}(t,0)\varphi (x)+\int_{0}^{t}e^{
-A(t,s)-N_{s}}U^{D}(t,s)g\LL(e^{N_{s}}v(\cdot ,s)\RR)(x)\,ds. $$
We will prove that for sufficiently big $R$  and sufficiently small $T$, $\psi$  is contraction on $P_T \cap C_R$.
Let $v_1, v_2\in  P_T \cap C_R. $ Then
$$
\norm{
\psi(v_1)-\psi(v_2)
}
\
\le
\
\sup_{0\le t\le T}
\int_0^t
\LL\|
 e^{-N_s}\LL[g\LL(e^{N_s}v_1\RR)-g\LL(e^{N_s}v_2\RR)\RR]\RR\|_{\infty}\,ds.
$$
	Let $A_T=\sup_{0\le s\le T}e^{|N_s|}$ and $G_R=\sup_{|x|<R}|g(x)|$, and assume that $g$ is locally Lipschitz with Lipschitz constant $K_R$ 
	in the ball of radius $R>0$ centered at $0$. Then,
	$$
	\sup_{0\le s\le T}\LL\|e^{-N_s}v_i(s,\cdot)\RR\|_{\infty}\ \le \ A_TR,\quad i=1,2,
	$$
and
$$
\LL\|
e^{-N_s}g\LL(
e^{N_s}v_1(s,\cdot)\RR)
-
e^{-N_s}g\LL(
e^{N_s}v_2(s,\cdot)\RR)
\RR\|_{\infty}\ \le \ A^2_T K_{A_TR}\LL\|v_1(s)-v_2(s)\RR\|_{\infty}.
$$
	Therefore,
	$$
	\norm{\psi(v_1)-\psi(v_2)}\ \le \ \sup_{0\le t\le T}\int_0^t
	A_{T}^2K_{A_TR}\,\norm{v_1-v_2}\,ds\ = \ TA^2_TK_{A_tR}\,\norm{v_1-v_2}.
	$$
	We need
	\begin{equation}\label{LL1}
	TA^2_TK_{A_TR}<1.
	\end{equation}
	In addition, we require that $C_R\cap P_T$ be mapped by $\psi$ into itself. Let $v\in C_R\cap P_T$. Using that for $0\le s\le T$ the operator $U^D(t,s)$ is a contraction, and that
$\|e ^{N_s}v(\cdot,s)\|_{\infty}\le A_T R$,  we get
$\|g(e^{N_s}v(\cdot,s))\|_{\infty}\le G_{A_TR}.$ It follows that
	$$
	\norm{\psi(v)} \ \le   \|\varphi\|_{\infty}
	+
	\sup_{0\le t\le T}\int_0^t\LL\|e^{-N(s)}\RR\|_{\infty}\,ds\,G_{A_TR}\
	\le\  \|\varphi\|_{\infty} +TA_TG_{A_TR}.
	$$
	Hence, we need that
	\begin{equation}\label{LL2}
	\|\varphi\|_{\infty}+TA_TG_{A_TR}\ <\ R.
	\end{equation}
	Let $R$ be such that $R\ge 2\|\varphi\|_{\infty}$. Since $\lim_{T\to0}A_T=1$, we choose $\varepsilon_1>0$ so that $A_T<2$ if $T<\varepsilon_1$, and
	$$
	\varepsilon \ < \ \frac{R}{4G_{2R}}\wedge\frac{1}{4K_{2R}}\wedge \varepsilon_1.
	$$
	Using that $G_A\le G_B$ and $ K_A\le K_B$ if $A\le B$, we get for
$R>2\|\varphi\|_{\infty}$ and $T<\varepsilon$,
	$$
\|\varphi\|_{\infty} + TA_TG_{A_TR} \ \le\  \|\varphi\|_{\infty} +2\varepsilon G_{2R} < \frac{R}{2} +\frac{R}{2} =R
	$$
	and
	$$
	TA^2_TK_{A_TR}< 4\varepsilon K_{2R}<1.
	$$
\end{proof}	
}

We proceed to prove equivalence of weak and mild solutions of \eqref{2.2}.
The proof of this theorem follows the method in \cite[Theorem 9.15]{PZ},
where this equivalence is shown for SPDE's with autonomous differential
operators and driven by L\'{e}vy noise. For a comparison of weak and mild
solutions of SPDEs driven by fractional Brownian motion we refer to \cite%
{333}.

We state first the Kolmogorov backward and forward equations for $U^{D}.$ By
the Kolmogorov backward equation for $P^{D}$, the transition density $%
p^{D}(u,x,y)$ satisfies, for any $y$ fixed, $\frac{\partial }{\partial u}%
p^{D}(u,x,y)=Lp^{D}(u,x,y).$ Then $(s,x)\rightarrow $ $p^{D}(s,x;t,y)$
satisfies, for $(t,y)$ fixed, the equation
\begin{equation*}
-\frac{\partial }{\partial s}p^{D}(s,x;t,y)=-\frac{\partial }{\partial s}%
p^{D}(K(t,s),x,y)=-\frac{\partial }{\partial u}p^{D}(u,x,y)\mid _{u=K(t,s)}%
\frac{\partial }{\partial s}K(t,s)
\end{equation*}
\begin{equation}
=\frac{1}{2}k^{2}(s)Lp^{D}(K(t,s),x,y)=\frac{1}{2}k^{2}(s)Lp^{D}(s,x;t,y).
\label{backward}
\end{equation}
Similarly, by the Kolmogorov forward equation for $P^{D},$ for any $x$
fixed, $p^{D}(u,x,y)$ satisfies
$$\frac{\partial }{\partial u}
p^{D}(u,x,y)=L^{\ast }p^{D}(u,x,y).$$  Then $(t,y)\rightarrow $ $%
p^{D}(s,x;t,y) $ satisfies, for $(s,x)$ fixed, the equation%
\begin{equation*}
\frac{\partial }{\partial t}p^{D}(s,x;t,y)=\frac{\partial }{\partial t}%
p^{D}(K(t,s),x,y)=\frac{\partial }{\partial u}p^{D}(u,x,y)\mid _{u=K(t,s)}%
\frac{\partial }{\partial t}K(t,s)
\end{equation*}
\begin{equation}
=\frac{1}{2}k^{2}(t)L^{\ast }p^{D}(K(t,s),x,y)=\frac{1}{2}k^{2}(t)L^{\ast
}p^{D}(s,x;t,y).  \label{forward}
\end{equation}

\begin{theorem}
{\color{black}\label{THM8} Consider the random partial differential equation \eqref{2.2}.
 Then $v$ is a weak solution of \eqref{2.2} on $%
[0,T]$ if and only if $v$ is a mild solution of \eqref{2.2} on $[0,T]$.
}
\end{theorem}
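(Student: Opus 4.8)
The plan is to follow the scheme of \cite[Theorem 9.15]{PZ}, adapted to the non-autonomous setting, using that the evolution families are time changes of the semigroups $P^D$ and $(P^D)^*$ and that $\mathrm{Dom}(L^*)$ is dense in $L^2(D)$ to pass between tested identities and genuine $L^2$-valued equalities. The two analytic facts that drive everything are the evolution equations for $U^D$ and $(U^D)^*$. Since $U^D(r,s)\varphi=P^D_{K(r,s)}\varphi$ with $L$ the generator of $P^D$, and $\partial_r K(r,s)=\tfrac12 k^2(r)$, the chain rule gives, for $\varphi\in L^2(D)$, the forward identity $\partial_r U^D(r,s)\varphi=\tfrac12 k^2(r)LU^D(r,s)\varphi$; dually, since $(U^D)^*(t,s)f=(P^D)^*_{K(t,s)}f$ with $L^*$ the generator of $(P^D)^*$ and $\partial_s K(t,s)=-\tfrac12 k^2(s)$, for $f\in\mathrm{Dom}(L^*)$ the function $f_s:=(U^D)^*(t,s)f$ stays in $\mathrm{Dom}(L^*)$, is $C^1$ in $s$ into $L^2(D)$, commutes with $L^*$, and satisfies the backward relation $\partial_s f_s=-\tfrac12 k^2(s)L^* f_s$, with $f_t=f$ and $f_0=(U^D)^*(t,0)f$. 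These are the operator versions of the density-level equations \eqref{backward} and \eqref{forward}.

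For weak $\Rightarrow$ mild, I would fix $t\in[0,T]$ and $f\in\mathrm{Dom}(L^*)$ and test \eqref{weak rpde} against the time-dependent function $f_s$. Because $s\mapsto\langle v(\cdot,s),h\rangle_D$ is absolutely continuous for each fixed $h$ (the Remark after Proposition \ref{PROP1}) and $s\mapsto f_s$ is $C^1$ into $L^2(D)$, a telescoping/Riemann-sum argument upgrades \eqref{weak rpde} to the time-dependent test-function identity
\begin{align*}
\langle v(\cdot,t),f\rangle_D={}&\langle\varphi,f_0\rangle_D+\int_0^t\langle v(\cdot,s),\partial_s f_s\rangle_D\,ds\\
&+\int_0^t\Big(\tfrac12 k^2(s)\langle v(\cdot,s),L^*f_s\rangle_D-\tfrac12 a^2(s)\langle v(\cdot,s),f_s\rangle_D\Big)\,ds\\
&+\int_0^t e^{-N_s}\langle g(e^{N_s}v(\cdot,s)),f_s\rangle_D\,ds.
\end{align*}
Substituting $\partial_s f_s=-\tfrac12 k^2(s)L^*f_s$, the first $L^*$-term cancels the $\tfrac12 k^2$-term exactly, and rewriting $\langle\,\cdot\,,(U^D)^*(t,s)f\rangle_D=\langle U^D(t,s)\,\cdot\,,f\rangle_D$ leaves
\begin{align*}
\langle v(\cdot,t),f\rangle_D={}&\langle U^D(t,0)\varphi,f\rangle_D-\tfrac12\int_0^t a^2(s)\langle U^D(t,s)v(\cdot,s),f\rangle_D\,ds\\
&+\int_0^t e^{-N_s}\langle U^D(t,s)g(e^{N_s}v(\cdot,s)),f\rangle_D\,ds.
\end{align*}
As $\mathrm{Dom}(L^*)$ is dense in $L^2(D)$, this is an equality in $L^2(D)$, which is precisely the (unreduced) mild formulation of the Definition preceding Proposition \ref{Proposition2}; by that proposition it is equivalent to \eqref{ms}.

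For the converse, mild $\Rightarrow$ weak, I would insert the mild representation of $v(\cdot,r)$ into the right-hand side of \eqref{weak rpde} and reassemble. Substituting $v(\cdot,r)=U^D(r,0)\varphi-\tfrac12\int_0^r a^2(\sigma)U^D(r,\sigma)v(\cdot,\sigma)\,d\sigma+\int_0^r e^{-N_\sigma}U^D(r,\sigma)g(e^{N_\sigma}v(\cdot,\sigma))\,d\sigma$ into $\int_0^t\tfrac12 k^2(r)\langle v(\cdot,r),L^*f\rangle_D\,dr$, moving $L^*$ onto the evolution family by the adjoint relation $\langle L\psi,f\rangle_D=\langle\psi,L^*f\rangle_D$, applying Fubini to interchange the $r$ and $\sigma$ integrations, and using $\int_\sigma^t\tfrac12 k^2(r)LU^D(r,\sigma)\psi\,dr=(U^D(t,\sigma)-I)\psi$ from the forward identity, the term $\int_0^t\tfrac12 k^2(r)\langle v(\cdot,r),L^*f\rangle_D\,dr$ collapses to $\langle v(\cdot,t),f\rangle_D-\langle\varphi,f\rangle_D$ together with exactly $+\tfrac12\int_0^t a^2(s)\langle v(\cdot,s),f\rangle_D\,ds-\int_0^t e^{-N_s}\langle g(e^{N_s}v(\cdot,s)),f\rangle_D\,ds$, which rearranges into \eqref{weak rpde}.

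The algebra of the cancellation and the reassembly is routine; the real work is the justifications. The main obstacle will be the rigorous handling of the time-dependent test function: verifying that $f_s$ remains in $\mathrm{Dom}(L^*)$ and is $C^1$ in $s$ with $\partial_s f_s=-\tfrac12 k^2(s)L^*f_s$, which I would reduce to standard semigroup theory via the representation $(U^D)^*(t,s)=(P^D)^*_{K(t,s)}$ and the Hille–Yosida generation of $L^*$, and then justifying the telescoping passage to the time-dependent weak identity. In the converse direction the analogous obstacle is the use of Fubini, i.e. the absolute integrability needed to interchange the orders of integration; on $[0,T]$ this is guaranteed by the boundedness of the mild solution from Theorem \ref{PROP3}, the continuity of $s\mapsto N_s$, and the local boundedness of $g$ on the range of $e^{N_s}v(\cdot,s)$, so that $s\mapsto g(e^{N_s}v(\cdot,s))$ is bounded in $L^\infty(D)$ on compact time intervals and every integral above converges absolutely.
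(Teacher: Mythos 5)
Your proposal is correct and follows essentially the same route as the paper: for weak~$\Rightarrow$~mild, the paper likewise upgrades \eqref{weak rpde} to time-dependent test functions (via integration by parts against $h(t)f(x)$ and density of such products in $C^1([0,\infty),\mathrm{Dom}(L^*))$, which is your telescoping step) and then inserts $\psi(\cdot,s)=(U^D)^*(t,s)f$ so that the Kolmogorov backward equation cancels the $\tfrac12 k^2 L^*$ term; for mild~$\Rightarrow$~weak, it performs the same substitution, Fubini interchange, and integrated forward-equation collapse $\int_r^t\tfrac12 k^2(s)L\,U^D(s,r)\,ds=(U^D(t,r)-I)$ that you describe.
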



\begin{proof}
 Assume that $v$ is a weak solution of
\eqref{2.2}. Let {\color{black}$h\in {C}^{1}([0,\infty),\mathbb{R})$}, $f\in {\dom(L}^{\ast }),$ and $%
G(x,t):=-\frac{1}{2}a^{2}(t)v(x,t)+\exp (-N_{t})g(\exp (N_{t})v(x,t)).$
The integration by parts formula is applicable since $h\in
{\color{black} {C}^{1}([0,\infty),\mathbb{R})}
$ (see \cite{PZ} Proposition 9.16) and yields%
\begin{eqnarray*}
\langle v(\cdot ,t),h(t)f(\cdot )\rangle _{D}
&=&\langle v(\cdot ,0),h(0)f(\cdot )\rangle _{D} +\int_{0}^{t}\langle v(\cdot
,s),h^{\prime }(s)f(\cdot )\rangle _{D}\, ds\\ \nonumber 
&+&\int_{0}^{t}\langle v(\cdot ,s),\frac{1}{2}h(s)k^{2}(s)L^{\ast }f(\cdot
)\rangle _{D}\, ds+\int_{0}^{t}\langle G(\cdot ,s),h(s)f(\cdot )\rangle _{D} \,ds.
\end{eqnarray*}%
{\color{black}
Since the functions $h\cdot f$ are dense in $C^1([0,\infty),{\rm Dom}(L^*))$, for each $z\in C^1([0,\infty),{\rm Dom}(L^*))$ we have
\begin{eqnarray}\label{5}
\langle v(\cdot ,t),z(\cdot,t )\rangle _{D}
&=&\langle v(\cdot ,0),z(\cdot,0)\rangle _{D} +\int_{0}^{t}\langle v(\cdot
,s),\frac{\partial}{\partial s}z(\cdot,s)\rangle _{D}\, ds\\ \nonumber 
&+&\int_{0}^{t}\langle v(\cdot ,s),\frac{1}{2}k^{2}(s)L^{\ast }z(\cdot,s
)\rangle _{D}\, ds+\int_{0}^{t}\langle G(\cdot ,s),z(\cdot,s )\rangle _{D} \,ds.
\end{eqnarray}
For each $f\in{\rm Dom}(L^*) $ we define
\begin{equation*}
\psi (x,s):=(U^{D})^{\ast }(t,s)f(x)=\left\{
\begin{tabular}{ll}
$\langle p^{D^{\ast }}(s,x;t,\cdot ),f(\cdot )\rangle _{D}$ & if $s<t$, \\
&  \\
$f(x)$ & if $s=t$,%
\end{tabular}%
\right.
\end{equation*}%
hence $\psi\in C^1([0,\infty),{\rm Dom}(L^*))$.
Taking $z=\psi(x,s)$ in \eqref{5} we get, for any $t\in \lbrack 0,T]$ fixed,}
\begin{eqnarray}\nonumber
\langle v(\cdot ,t),\psi (\cdot ,t)\rangle _{D} &=&\langle v(\cdot ,0),\psi
(\cdot ,0)\rangle _{D}+\int_{0}^{t}\left\langle v(\cdot ,s),\frac{d}{ds}\psi
(\cdot ,s)+\frac{1}{2}k^{2}(s)L^{\ast }\psi (\cdot ,s)\right\rangle _{D}ds \\ \label{sept-1}
&&+\int_{0}^{t}\langle G(\cdot ,s),\psi (\cdot ,s)\rangle _{D}\,ds.
\end{eqnarray}%
Now we evaluate the  terms above:%
\begin{eqnarray*}
 \langle v(\cdot ,0),\psi (\cdot ,0)\rangle _{D}
&=&\int_{D}v(x,0)\int_{D}p^{D^{\ast }}(0,x;t,y)f(y)\,dy\,dx \\
&=&\int_{D}f(y)\int_{D}p^{D^{\ast }}(0,x;t,y)v(x,0)\,dx\,dy
 \ = \ \left\langle U^{D}(t,0)v(\cdot ,0),f(\cdot )\right\rangle _{D}.
\end{eqnarray*}
By applying the Kolmogorov backward equation to $(x,s)\rightarrow (U^{D})^{\ast }(t,s)f(x)$ we get
\begin{eqnarray*}
-\frac{d}{ds}\psi (x,s) &=&-\frac{\partial }{\partial s}\left\langle
(p^{D})^{\ast }(s,x;t,\cdot ),f(\cdot )\right\rangle _{D} \\
&=&\frac{1}{2}k^{2}(s)L^{\ast }\left\langle (p^{D})^{\ast }(s,x;t,\cdot
),f(\cdot )\right\rangle _{D}=\frac{1}{2}k^{2}(s)L^{\ast }\psi (x,s).
\end{eqnarray*}
Moreover, from Fubini's theorem and \eqref{PDt1}
\begin{eqnarray*}
\langle G(\cdot ,s),\psi (\cdot ,s)\rangle _{D}
&=&\int_{D}G(x,s)\int_{D}p^{D^{\ast }}(s,x;t,y)f(y)\,dy\,dx \\
&=&\int_{D}f(y)\int_{D}p^{D}(s,y;t,x)G(x,s)\,dx\,dy
\ = \ \left\langle U^{D}(t,s)G(\cdot ,s),f(\cdot )\right\rangle _{D}.
\end{eqnarray*}
Therefore, from \eqref{sept-1},
$\left\langle v(\cdot ,t),f(\cdot )\right\rangle _{D}=\left\langle
U^{D}(t,0)v(\cdot ,0),f(\cdot )\right\rangle _{D}+\int_{0}^{t}\langle
U^{D}(t,s)G(\cdot ,s),f(\cdot )\rangle _{D}\,ds
$
{\color{black} for all $f\in{\rm Dom}(L^*)$.
Since $\dom(L^{\ast })$ is dense in $L^{2}(D)$ }
 we obtain that $v$ is a
mild solution of \eqref{2.2} on $[0,T].$

To prove the converse let $v$ be a mild solution of \eqref{2.2} on $[0,T].$ For $f \in \dom(L^{\ast }),$
\begin{eqnarray}\nonumber
\lefteqn{\int_{0}^{t}\LL\langle v(\cdot ,s),\frac{1}{2}k^{2}(s)L^{\ast }f(\cdot
)\RR\rangle _{D}\,ds} \\ \nonumber
&=&\int_{0}^{t}\LL\langle U^{D}(s,0)v(\cdot ,0),\frac{1}{2}k^{2}(s)L^{\ast
}f(\cdot )\RR\rangle _{D}\,ds \\ \nonumber
&&+\int_{0}^{t}\left\langle \int_{0}^{s}\chi _{\lbrack
0,s]}(r)U^{D}(s,r)G(\cdot ,r)\,dr,\frac{1}{2}k^{2}(s)L^{\ast }f(\cdot
)\right\rangle _{D}\,ds \\ \nonumber
&=&\int_{0}^{t}\LL\langle v(\cdot ,0),(U^{D})^{\ast }(s,0)\frac{1}{2}%
k^{2}(s)L^{\ast }f(\cdot )\RR\rangle_{D}\,ds \\
&&
+\int_{0}^{t}\int_{r}^{t}\left\langle U^{D}(s,r)G(\cdot ,r),\frac{1}{2}%
k^{2}(s)L^{\ast }f(\cdot )\right\rangle _{D}\,ds\,dr. \label{6}
\end{eqnarray}
By applying the Kolmogorov forward equation to $(U^{D})^{\ast }$ we get for the
first integral on the right side of \eqref{6}:
\begin{eqnarray*}
&&
(U^{D})^{\ast }(s,0)(\frac{1}{2}k^{2}(s)L^{\ast }f)(x)=\int_{D}p^{D^{\ast }}(0,x;s,y)%
\frac{1}{2}k^{2}(s)L^{\ast }f(y)\,dy\\
&&
=\int_{D}(\frac{1}{2}k^{2}(s)L)p^{D^{\ast }}(0,x;s,y)f(y)\,dy=\int_{D}\frac{\partial }{%
\partial s}p^{D^{\ast }}(0,x;s,y)f(y)\,dy,
\end{eqnarray*}
and therefore%
\begin{eqnarray*}\lefteqn{
\int_{0}^{t}\LL\langle v(\cdot ,0),(U^{D})^{\ast }(s,0)(\frac{1}{2}k^{2}(s)L^{\ast })f(\cdot
)\RR\rangle_{D}\,ds}\\
&=&\int_{0}^{t}\LL\langle v(\cdot ,0),\int_{D}\frac{\partial }{\partial s}%
p^{D^{\ast }}(0,\cdot ;s,y)f(y)\,dy\RR\rangle_{D}\,ds \
= \ \LL\langle v(\cdot
,0),\int_{D}p^{D^{\ast }}(0,\cdot ;t,y)f(y)dy-f(\cdot )\RR\rangle _{D} \\
&=&\LL\langle v(\cdot ,0),(U^{D})^{\ast }(t,0)f(\cdot )\RR\rangle_{D}-\langle
v(\cdot ,0),f(\cdot )\rangle_{D}.
\end{eqnarray*}
In the same way we get for the second integral on the right side of \eqref{6}%
\begin{eqnarray*}
\left\langle U^{D}(s,r)G(\cdot ,r),\frac{1}{2}k^{2}(s)L^{\ast }f(\cdot
))\right\rangle _{D}
&=&\left\langle G(\cdot ,r),(U^{D})^{\ast }(s,r)(\frac{1}{2}k^{2}(s)L^{\ast
}f)(\cdot )\right\rangle _{D} \\
&=&\left\langle G(\cdot ,r),\int_{D}\frac{\partial }{\partial s}%
p^{D^{\ast }}(r,\cdot ;s,y)f(y)dy\right\rangle _{D},
\end{eqnarray*}
and therefore
\begin{eqnarray*}\lefteqn{
\int_{r}^{t}\left\langle U^{D}(s,r)G(\cdot ,r),\frac{1}{2}k^{2}(s)L^{\ast
}f(\cdot )\right\rangle _{D}ds=\int_{r}^{t}\left\langle G(\cdot ,r),\int_{D}%
\frac{\partial }{\partial s}p^{D^{\ast }}(r,\cdot ;s,y)f(y)dy\right\rangle _{D}ds}\\
&=&\left\langle G(\cdot ,r),\int_{D}p^{D^{\ast }}(r,\cdot ;t,y)f(y)dy-f(\cdot
)\right\rangle _{D}=\left\langle G(\cdot ,r),(U^{D})^{\ast }(t,r)f(\cdot
)-f(\cdot )\right\rangle _{D} \\
&=&\LL\langle U^{D}(t,r)G(\cdot ,r),f(\cdot )\RR\rangle_{D}-\LL\langle
G(\cdot ,r),f(\cdot )\right\rangle _{D}.
\end{eqnarray*}
In this way we obtain%
\begin{eqnarray*}\lefteqn{
\int_{0}^{t}\langle v(\cdot ,s),\frac{1}{2}k^{2}(s)L^{\ast }f(\cdot
)\rangle _{D}\,ds }\\
&=&\LL\langle U^{D}(t,0)v(\cdot ,0)+\int_{0}^{t}U^{D}(t,r)G(\cdot ,r)dr,f(\cdot
)\rangle _{D}-\langle v(\cdot ,0),f(\cdot )\RR\rangle_{D}
-\int_{0}^{t}\langle G(\cdot ,r),f(\cdot )\rangle _{D}\,dr \\
&=&\left\langle v(\cdot ,t),f(\cdot )\right\rangle _{D}-\langle v(\cdot
,0),f(\cdot )\rangle \,_{D}-\int_{0}^{t}\langle G(\cdot ,r),f(\cdot )\rangle
_{D}\,dr,
\end{eqnarray*}%
since $v$ is a mild solution on $[0,T]$. It follows that $v$ is a weak
solution on $[0,T].$
\end{proof}

\begin{coro}
\label{CORO1}The equations \eqref{2.1} and \eqref{2.2} possess unique weak
solutions.
\end{coro}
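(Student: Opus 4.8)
The plan is to assemble this corollary from the three preceding results on \eqref{2.2} together with the change of variables of Proposition \ref{PROP1}. The point is that all the analytic work has already been done: local existence/uniqueness of the mild solution (Theorem \ref{PROP3}), the weak--mild equivalence (Theorem \ref{THM8}), and the bijection between solutions of \eqref{2.1} and \eqref{2.2} (Proposition \ref{PROP1}). What remains is to chain these together and to handle the passage from a local to a maximal time interval.

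First I would record local existence and uniqueness for \eqref{2.2}. By Proposition \ref{Proposition2} the mild form of \eqref{2.2} is exactly \eqref{ms}, and Theorem \ref{PROP3} guarantees that \eqref{ms} admits a unique non-negative mild solution on some interval $[0,t_0)$ with $t_0>0$. By Theorem \ref{THM8}, on every $[0,T]$ the notions of weak and mild solution of \eqref{2.2} coincide; applying this for each $T<t_0$ and taking the union over such $T$ shows that \eqref{2.2} possesses a unique weak solution on $[0,t_0)$.

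Second I would extend this from the local interval to the maximal one. Since $g$ is only locally Lipschitz, the solution produced by Theorem \ref{PROP3} is a priori local. Restarting the fixed-point construction from a time close to $t_0$ and gluing the resulting pieces, with the overlaps agreeing by local uniqueness, yields a unique $(\mathcal{F}_t,t\geqq 0)$-adapted weak solution defined on a maximal random interval $[0,\tau)$, where $\tau$ is the blowup time in the sense of the definition of Section 1 (with $\tau=\infty$ on the event of no blowup). This gives the unique weak solution of \eqref{2.2} up to $\tau$. Finally I would transfer the conclusion to \eqref{2.1}: Proposition \ref{PROP1} shows that $u\mapsto v=\exp(-N_t)u$ is a bijection between weak solutions of \eqref{2.1} up to $\tau$ and weak solutions of \eqref{2.2} up to $\tau$, with inverse $v\mapsto u=\exp(N_t)v$; since the target class is a singleton, so is the source class, and \eqref{2.1} has a unique weak solution as well.

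The main obstacle I anticipate is not any single estimate but the continuation step. One must check that the local solutions can genuinely be patched into a single adapted maximal solution, that uniqueness is preserved across the patching, and in particular that two weak solutions agreeing up to $\min(\tau_1,\tau_2)$ by local uniqueness must in fact have the same blowup time, so that $\tau$ is unambiguously defined and coincides with the random time appearing in Proposition \ref{PROP1}. Everything else reduces to quoting Theorems \ref{PROP3} and \ref{THM8} and Proposition \ref{PROP1}, so the remaining work is organizational rather than analytic.
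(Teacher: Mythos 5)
Your proposal is correct and follows essentially the same route as the paper, whose proof is a one-sentence assembly of exactly the same three ingredients: Theorem \ref{PROP3} for local existence and uniqueness of the mild solution, Theorem \ref{THM8} for the weak--mild equivalence, and Proposition \ref{PROP1} to transfer uniqueness to \eqref{2.1}. The continuation-to-maximal-interval step that you (rightly) flag as needing care is not spelled out in the paper either, so your treatment is if anything slightly more complete than the original.
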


\begin{proof} Theorem \ref{THM8} and Proposition \ref{PROP3} show the existence and uniqueness of\ a local
weak and mild solution of \eqref{2.2}, and Proposition \ref{PROP1} shows the uniqueness of a weak solution of \eqref{2.1}.
\end{proof}

\begin{rem}
We refer to \cite{NV} for an existence and uniqueness theorem of the
variational solution of an SPDE with a nonautonomous second order differential operator and driven by fractional Brownian motion, and to \cite{RalSche} for the existence and uniqueness of the mild solution. In \cite{MRS} the existence and uniqueness of the mild solution is shown for equations with the same differential operator and driven by mixed noise.
\end{rem}

\section{An upper bound for the blowup time and probability estimates}\label{Section3}
\subsection {An upper bound for the blowup time}\label{Subsection3.1}
In  the remaining part of the paper we will assume that $L$ and $L^*$ admit strictly positive eigenfunctions: there
exists a positive eigenvalue $\lambda _{0}$ and strictly positive
eigenfunctions $\psi _{0}\in $ $\mathrm{Dom}(L)$ for $P_{t}^{D}$ and $%
\varphi _{0}\in \mathrm{Dom}(L^{\ast })$ for $(P^{D})_{t}^{\ast }$ with $%
\int_{D}\psi _{0}(x)dx=\int_{D}\varphi _{0}(x)dx=1$ such that%
\begin{equation}  \label{P}
(P_{t}^{D}-e^{-\lambda _{0}t})\psi _{0}=((P^{D})_{t}^{\ast }-e^{-\lambda
_{0}t})\varphi _{0}=0,
\end{equation}
{\color{black}hence}
\begin{equation}  \label{L}
(L+\lambda _{0})\psi _{0}=(L^{\ast }+\lambda _{0})\phi _{0}=0.
\end{equation}%
 For generators of a
general class of L\'{e}vy processes, properties (\ref{P}) and (\ref{L})
follow from \cite{KyS,CyW}. 
Another example are the diffusion processes: for $f\in
\mathcal{C}_{0}^{2}(D),$ the set of twice continously differentiable
functions with compact support in $D,$ let us define the differential
operator
\begin{equation*}
Lf=\sum_{j,k=1}^{d}\frac{\partial }{\partial x_{j}}\left(a_{jk}\frac{%
\partial }{\partial x_{k}}f\right)+\sum_{j=1}^{d}b_{j} \frac{\partial }{%
\partial x_{j}}f-cf,
\end{equation*}
where $a_{j,k},$ $b_{j},$ $j,k=1,...,d$ are bounded smooth functions on $D$
and $c$ is bounded and continous. We assume that the matrix $(a_{j,k},$ $%
j,k=1,...,d)$ is symmetric and uniformly elliptic.
 In this case properties (\ref{P}) and (\ref{L}) follow from \cite[%
Theorem 11, Chapter 2]{Friedman}.


\begin{theorem}
\label{THM1}
Assume (\ref{L}) and let $g(z) \geq Cz^{1+\beta }$ for all $z>0$, where $C>0$, $\beta >0$, are given constants.
Let us define
\begin{equation}
\tau ^{\ast }=\inf \left\{t>0:\int_{0}^{t}\exp \left[-\beta (\lambda
_{0}K(r)+A(r))+\beta N_{r}\right]\,dr\ \geq \ \frac{1}{C\beta }\langle {%
	\varphi ,\phi _{0}\rangle}_{D}^{-\beta }\right\},  \label{tau*}
\end{equation}
where the functions $K$ and $A$ are defined in (\ref{AK}). Then, on the
event $\{\tau ^{\ast }<\infty \}$ the solution $v$ of~\eqref{2.2} and the
solution $u$ of \eqref{2.1} blow up in finite time $\tau$, and $\tau \leq
\tau ^{\ast}$ $\mathbb{P}$-a.s.
\end{theorem}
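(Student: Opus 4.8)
The plan is to reduce the RPDE to a scalar Bernoulli differential inequality by projecting onto the positive eigenfunction $\phi_0$, in the spirit of Kaplan's method. Working with the solution $v$ of \eqref{2.2}, I set
\[
y(t):=\langle v(\cdot,t),\phi_0\rangle_D,\qquad 0\le t<\tau,
\]
and test the weak form \eqref{weak rpde} against $f=\phi_0\in\mathrm{Dom}(L^*)$. Using the eigenrelation $L^*\phi_0=-\lambda_0\phi_0$ from \eqref{L}, the term $\langle v(\cdot,s),L^*\phi_0\rangle_D$ collapses to $-\lambda_0 y(s)$, so that (recalling $K'(s)=\tfrac12 k^2(s)$ and $A'(s)=\tfrac12 a^2(s)$) the function $y$, which is absolutely continuous by the Remark following Proposition \ref{PROP1}, satisfies for a.e.\ $s$
\[
y'(s)=-\bigl(\lambda_0 K'(s)+A'(s)\bigr)y(s)+e^{-N_s}\langle g(e^{N_s}v(\cdot,s)),\phi_0\rangle_D .
\]
To bound the last term from below I would invoke the hypothesis $g(z)\ge Cz^{1+\beta}$, giving $e^{-N_s}g(e^{N_s}v)\ge Ce^{\beta N_s}v^{1+\beta}$, and then Jensen's inequality: since $\phi_0\ge0$ with $\int_D\phi_0=1$ is a probability density, $z\mapsto z^{1+\beta}$ is convex on $[0,\infty)$, and $v\ge0$ by the Remark after Proposition \ref{Proposition2}, one has $\langle v^{1+\beta},\phi_0\rangle_D\ge y^{1+\beta}$. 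This yields
\[
y'(s)\ \ge\ -\bigl(\lambda_0 K'(s)+A'(s)\bigr)y(s)+C e^{\beta N_s}y(s)^{1+\beta}.
\]

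Next I would remove the linear term with the integrating factor $w(t):=\exp(\lambda_0 K(t)+A(t))\,y(t)$, for which $w(0)=y(0)=\langle\varphi,\phi_0\rangle_D>0$ and
\[
w'(s)\ \ge\ C\exp\!\bigl[-\beta(\lambda_0 K(s)+A(s))+\beta N_s\bigr]\,w(s)^{1+\beta}=:C\,h(s)\,w(s)^{1+\beta}.
\]
Since $h\ge0$ and $w\ge0$, the function $w$ is nondecreasing from its strictly positive initial value, hence $w\ge w(0)>0$ and never vanishes; dividing by $w^{1+\beta}$ and integrating gives
\[
w(t)^{-\beta}\ \le\ w(0)^{-\beta}-C\beta\int_0^t h(s)\,ds=\langle\varphi,\phi_0\rangle_D^{-\beta}-C\beta\int_0^t h(s)\,ds .
\]
The right-hand side decreases to $0$ precisely when $\int_0^t h(s)\,ds$ reaches $\tfrac{1}{C\beta}\langle\varphi,\phi_0\rangle_D^{-\beta}$, which is exactly the definition \eqref{tau*} of $\tau^*$. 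Hence on $\{\tau^*<\infty\}$ we must have $w(t)^{-\beta}\to0$, i.e.\ $w(t)\to\infty$, as $t\nearrow\tau^*$.

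Finally, because $\lambda_0 K(t)+A(t)$ stays finite for finite $t$, it follows that $y(t)=\exp(-\lambda_0 K(t)-A(t))w(t)\to\infty$ as $t\nearrow\tau^*$; and since $y(t)=\int_D v(x,t)\phi_0(x)\,dx\le\sup_{x\in D}|v(x,t)|$, this forces $\sup_{x\in D}|v(x,t)|\to\infty$, so the blowup time satisfies $\tau\le\tau^*$ on $\{\tau^*<\infty\}$. The same bound transfers to $u$ through $u(x,t)=\exp(N_t)v(x,t)$, since $N_t$ is a.s.\ finite on finite intervals. The cleanest rigorous phrasing is by contradiction: if $\tau>\tau^*$ on a set of positive probability, then $y$ would remain finite and continuous on $[0,\tau^*]$, contradicting $y(t)\to\infty$. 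I expect the main obstacle to be justifying this reduction at the level of a weak solution --- namely that $s\mapsto\langle v(\cdot,s),\phi_0\rangle_D$ is genuinely absolutely continuous so the differential inequality holds a.e., together with the integrability needed for Jensen's inequality and the verification that $w$ stays strictly positive up to $\tau^*$.
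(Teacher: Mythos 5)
Your proposal is correct and follows essentially the same route as the paper: project the weak form onto the eigenfunction $\phi_0$, use $L^*\phi_0=-\lambda_0\phi_0$, the bound $g(z)\ge Cz^{1+\beta}$ and Jensen's inequality to obtain the Bernoulli-type differential inequality, and conclude blowup at or before $\tau^*$. The only cosmetic difference is that you integrate the inequality directly via the integrating factor $e^{\lambda_0K(t)+A(t)}$, whereas the paper compares with the explicit solution $I(t)$ of the associated ODE --- the same computation in a different guise.
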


\begin{proof} 
Using the hypothesis on $g$ and   Jensen's inequality we get for the terms in \eqref{weak rpde}:
\begin{eqnarray*}
	\langle v(\cdot ,s),L^{\ast }\phi _{0}\rangle _{D}&=&-\lambda _{0}\langle
	v(\cdot ,s),\phi _{0}\rangle _{D},\\
	\exp (-N_{s})\left\langle g(\exp (N_{s})v(\cdot ,s)),\phi _{0}\right\rangle
	_{D} &\geqq &C\exp (\beta N_{s})\left\langle v^{1+\beta }(\cdot ,s),\phi
	_{0}\right\rangle _{D} ,\\
	&\geqq &C\exp (\beta N_{s})\langle {v(\cdot ,s),\phi _{0}\rangle }%
	_{D}^{1+\beta }.
\end{eqnarray*}%
Applying these lower bounds to ($\langle {v(\cdot ,t+\varepsilon ),\phi
	_{0}\rangle }_{D}-\langle {v(\cdot ,t),\phi _{0}\rangle }_{D})/\varepsilon $
and letting $\varepsilon \rightarrow 0$ we get
\begin{equation}
\frac{d}{dt}\langle {v(\cdot ,t),\phi _{0}\rangle }_{D}\geqq -\frac{1}{2}%
(\lambda _{0}k^{2}(t)+a^{2}(t))\langle v(\cdot ,t),\phi _{0}\rangle
_{D}+C\exp (\beta N_{t})\langle {v(\cdot ,t),\phi _{0}\rangle }_{D}^{1+\beta
}.  \label{ineq}
\end{equation}
The corresponding differential equality reads%
\begin{equation*}
\frac{d}{dt}I(t)=-\frac{1}{2}(\lambda _{0}k^{2}(t)+a^{2}(t))I(t)+C\exp
(\beta N_{t})I(t)^{1+\beta },
\end{equation*}
and  $I(t)$ is  a subsolution of
\eqref{ineq}, i.e. $\langle {v(\cdot ,t),\phi _{0}\rangle }_{D}\geqq I(t)$.
Then
\begin{equation*}
I(t)=\exp [-(\lambda _{0}K(t)+A(t))]\left( \langle \varphi {,\phi
	_{0}\rangle }_{D}^{-\beta }-\beta C\int_{0}^{t}\exp\LL [-\beta (\lambda
_{0}K(s)+A(s))+\beta N_{s}\RR]ds\right) ^{-1/\beta }
\end{equation*}
for all $t\in \lbrack 0,\tau ^{\ast }),$ where $\tau ^{\ast }$ is given by~%
\eqref{tau*}. Therefore $\tau ^{\ast }$ is an upper bound for the blowup
time of $\langle {v(\cdot ,t),\phi _{0}\rangle }_{D}$, and the function
$
t\mapsto \|{v(\cdot ,t)\| }_{\infty }=\exp
(-N_{t})\| {u(\cdot ,t)\|}_{\infty }
$
can not stay finite on $[0,\tau ^{\ast }]$ if $\tau ^{\ast }<\infty $.
Therefore $u$ and $v$ blow up before $\tau ^{\ast }$ if $\tau ^{\ast
}<\infty $.
\end{proof}

\begin{rem}
Notice that $\tau ^{\ast }$ depends on $L$ only by the positive
	eigenvalue $\lambda _{0}$ and the associated eigenfunction $\phi _{0}.$
	Moreover, $\tau ^{\ast }$ is a decreasing function of $\varphi ,$ $\phi _{0}$
	and $C$, and an increasing function of $\lambda _{0}K.$ Therefore small
	functions $\varphi $, $\phi _{0}$ and a small constant $C,$ as well as high
	values of $\lambda _{0}K$ postpone the blowup of $I$ and have, in this
	sense, the tendency to postpone the blowup of $v$ and $u.$
\end{rem}

\subsection {A tail probability estimate for the upper bound of the blowup time} \label{section.3.1}

In the following theorem  we apply a tail probability estimate for exponential
functionals of fBm studied by N.T.\ Dung \cite{D} to estimate the
probability that $\tau ^{\ast }$ occurs before a fixed time $T$. Here we assume that the process $B^H$ is given by the formula
\begin{equation}\label{FBM}
B_{t}^{H}=\int_{0}^{t}{K^{H}(t,s)\,dB_{s}},
\end{equation}
where  the kernel $K^{H}$ is given for $H>1/2$ by
\begin{equation}\label{Kernel-K}
K^{H}(t,s)=\left\{
\begin{array}{ll}
C_{H}s^{1/2-H}\int_{s}^{t}{(\sigma -s)^{H-3/2}\sigma ^{H-1/2}d\sigma } &
\text{ if }t>s, \\
&  \\
0 & \text{ if }t\leqq s,%
\end{array}%
\right.
\end{equation}%
{\color{black}
where $C_{H}=[\frac{H(2H-1)}{\mathcal{B}(2-2H,H-1/2)}]^{\frac{1}{2}}$ and $\mathcal{B}$ is the usual beta function (see Section 5.1.3 in   \cite {N} for a general representation formula of fBm with $H>1/2$). Notice that $B^H$ and $B$ are dependent in this case.}

\begin{theorem} \label{THM2} Under assumptions \eqref{L} and \eqref{FBM},
let
$g(z) \geq Cz^{1+\beta
}$
for
all $z>0$, where $C>0$, $\beta >0$, are given constants,
and let
$\mu(T) =\int_{0}^{T}\exp [-\beta (\lambda _{0}K(t)+A(t))]%
\mathbb{E}\left[ \exp (\beta N_{t})\right] dt $. Then, for any $T>0$ such that $\frac{1}{C\beta }\langle \varphi ,\phi
_{0}\rangle _{D} ^{-\beta } >\mu(T) ,$
\begin{equation*}
\mathbb{P}\left\{ \tau ^{\ast }\leq T\right\} \leq 2\exp \left( -\frac{\ln^2%
	\left[ C\beta\langle \varphi ,\phi _{0}\rangle_{D}^{\beta} \, \mu(T) \right]%
}{2M(T)}\right) ,
\end{equation*}%
where
\begin{equation*}
M(T) =2\beta ^{2}\int_{0}^{T}a^{2}(r)\,dr +4\beta ^{2}HT^{2H-1}\int_{0}^{T}b^{2}(u)\,du.
\end{equation*}

\end{theorem}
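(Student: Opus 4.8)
The plan is to reduce the statement to an upper-tail estimate for the exponential functional appearing in \eqref{tau*} and then invoke Dung's concentration inequality. Set $c(t):=\exp[-\beta(\lambda_0 K(t)+A(t))]$ and
$$X_t:=\int_0^t c(r)\exp(\beta N_r)\,dr,$$
so that $\theta:=\frac{1}{C\beta}\langle\varphi,\phi_0\rangle_D^{-\beta}$ is the threshold in \eqref{tau*}. Since the integrand is strictly positive, $t\mapsto X_t$ is continuous and strictly increasing, whence $\{\tau^*\le T\}=\{X_T\ge\theta\}$ and $\mathbb{P}(\tau^*\le T)=\mathbb{P}(X_T\ge\theta)$. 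Because $N_r$ is a centered Gaussian variable, $\mathbb{E}[\exp(\beta N_r)]=\exp(\tfrac{\beta^2}{2}\mathrm{Var}(N_r))$, and Fubini gives $\mathbb{E}[X_T]=\int_0^T c(r)\mathbb{E}[\exp(\beta N_r)]\,dr=\mu(T)$. The hypothesis $\theta>\mu(T)$ thus places us exactly in the upper-tail regime $\theta>\mathbb{E}[X_T]$.

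Under \eqref{FBM} the process $B^H$, and hence $N$, is a Wiener integral of the single Brownian motion $B$, so $X_T$ is a functional of the isonormal Gaussian process generated by $B$, with $\mathcal{H}=L^2([0,T])$, and $u_t:=\beta N_t$ lies in the first Wiener chaos. I would then apply the tail estimate of Dung \cite{D} for exponential functionals $\int_0^T\psi(t)e^{u_t}\,dt$ of a Gaussian process: it yields a bound of precisely the stated shape, the controlling constant being $\sigma^2=\sup_{t\le T}\|Du_t\|_{\mathcal{H}}^2$. Since $u_t$ is in the first chaos its Malliavin derivative is deterministic and $\|Du_t\|_{\mathcal{H}}^2=\beta^2\mathrm{Var}(N_t)$; equivalently, this is the constant in the Nourdin--Viens-type control $\langle DX_T,-DL^{-1}X_T\rangle_{\mathcal{H}}\le\sigma^2 X_T^2$ underlying Dung's inequality. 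It therefore remains to show $\beta^2\sup_{t\le T}\mathrm{Var}(N_t)\le M(T)$.

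This variance bound is the main computation. Writing $N_t=\int_0^t a\,dB+\int_0^t b\,dB^H$ and using $(x+y)^2\le 2x^2+2y^2$ to discard the cross-covariance, $\mathrm{Var}(N_t)\le 2\,\mathrm{Var}\big(\int_0^t a\,dB\big)+2\,\mathrm{Var}\big(\int_0^t b\,dB^H\big)$. The Brownian part equals $\int_0^t a^2\,dr\le\int_0^T a^2\,dr$. For the fractional part I would use the $H>1/2$ covariance identity $\mathrm{Var}(\int_0^t b\,dB^H)=H(2H-1)\int_0^t\int_0^t b(s)b(s')|s-s'|^{2H-2}\,ds\,ds'$ together with the Schur/Young estimate $\int_0^t|s-s'|^{2H-2}\,ds'\le 2t^{2H-1}/(2H-1)$, giving $\mathrm{Var}(\int_0^t b\,dB^H)\le 2Ht^{2H-1}\int_0^t b^2\,dr\le 2HT^{2H-1}\int_0^T b^2\,dr$. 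Combining, $\beta^2\mathrm{Var}(N_t)\le 2\beta^2\int_0^T a^2\,dr+4\beta^2 HT^{2H-1}\int_0^T b^2\,dr=M(T)$ uniformly in $t\le T$.

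Finally, applying Dung's estimate with $\sigma^2=M(T)$, $x=\theta$ and $\mathbb{E}[X_T]=\mu(T)$ gives $\mathbb{P}(X_T\ge\theta)\le 2\exp\big(-(\ln\theta-\ln\mu(T))^2/(2M(T))\big)$. From the explicit value of $\theta$, $\ln\theta-\ln\mu(T)=-\ln[C\beta\langle\varphi,\phi_0\rangle_D^{\beta}\mu(T)]$, so $(\ln\theta-\ln\mu(T))^2=\ln^2[C\beta\langle\varphi,\phi_0\rangle_D^{\beta}\mu(T)]$, which is the claimed exponent, while the factor $2$ is inherited from Dung's two-sided bound. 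I expect the main obstacle to be the correct invocation of Dung's inequality — verifying that $X_T$ satisfies its hypotheses and that the controlling constant is indeed $\sup_{t\le T}\|D(\beta N_t)\|_{\mathcal{H}}^2$ (equivalently, proving the Malliavin control $\langle DX_T,-DL^{-1}X_T\rangle_{\mathcal{H}}\le M(T)X_T^2$) — rather than the variance estimate itself, which is routine.
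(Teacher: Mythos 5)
Your proposal is correct and follows essentially the same route as the paper: rewrite $\{\tau^*\le T\}$ as an upper-tail event for the exponential functional, apply Dung's concentration inequality from \cite{D}, and bound the Malliavin-derivative quantity $\sup_{t\le T}\int_0^T|D_rX_t|^2\,dr$ by $M(T)$. Your observation that under \eqref{FBM} this quantity equals $\beta^2\,\mathrm{Var}(N_t)$, which you then bound via the fBm covariance identity, is just a cleaner packaging of the paper's explicit kernel computation of $\Phi(s,s')=H(2H-1)(s-s')^{2H-2}$, so the two arguments coincide in substance.
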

\begin{proof} For $t\ge0, $ using \eqref{FBM}, we have the following representation:
\begin{eqnarray}
X_{t}&:=&-\beta(\lambda_0 K(t)+A(t)) +\beta N_{t} \\ \nonumber \label{Rep}
&=&-\beta(\lambda_0 K(t)+A(t)) +\beta \left(
\int_{0}^{t}a(s)\,dB_{s}+\int_{0}^{t}\int_{s}^{t}b(r)\frac{\partial }{\partial r}
K^H(r,s)\,dr\,dB_{s}\right).\\ \nonumber
\end{eqnarray}%
From  \cite[Theorem 3.1]{D}
it follows that for any $T\geq 0$ and any $x>\mu(T),$ there holds
\begin{equation}
\PP\left( \int_{0}^{T}e^{X_{t}}dt\geq x\right) \leq 2\exp \LL[-\frac{(\ln x-\ln
	\mu(T) )^{2}}{2M(T)}\RR],  \label{Dung}
\end{equation}%
where $\mu (T)=\int_{0}^{T}\mathbb{E}\left[
e^{X_{t}}\right] dt$ and $M(T)$ is such that
\begin{equation}\label{M1}
\sup_{t\in \lbrack 0,T]}\int_{0}^{T}|D_{r}X_{t}|^{2}\,dr\leq M(T)\quad
\mbox{$\PP$-a.s.}
\end{equation}
Here $D_rX_t$ denotes the Malliavin derivative of $X_t$.
In the following we will find an upper bound $M(T)$ such that (\ref{M1}) holds. For $r<t$ we have, using the representation \eqref{Rep},
\begin{eqnarray*}
	D_{r}X_{t}=\beta \left( a(r)+\int_{r}^{t}b(s)\frac{\partial }{\partial s}
	K(s,r)\,ds \right) .\\
\end{eqnarray*}
Hence  $\int_0^t |D_{r}X_{t}|^2 \,dr \leq 2 \beta^2 \int_0^t a^2(r) \,dr + 2 \beta^2 \int_0^t(\int_r^t b(s)  \frac{\partial}{\partial s}K(s,r) \,ds)^2 \,dr $ and
\begin{eqnarray*}
	\lefteqn{
		\int_0^t\left(\int_r^t b(s) \frac{\partial}{\partial s}K(s,r) \,ds\right)^2 \,dr}\\
	&=&\int_0^t \left(\int_r^t b(s)  \frac{\partial}{\partial s}K(s,r) \,ds\right)\left(\int_r^t b(s')  \frac{\partial}{\partial s'}K(s',r) \,ds'\right)\,dr  \\ \
	&=&\int_0^t  b(s)\,ds \int_0^s  \frac{\partial}{\partial s}K(s,r) \,dr \int_r^t b(s')  \frac{\partial}{\partial s'}K(s',r) \,ds' \\ \
	&=&\int_0^t ds\, b(s) \int_0^t \,dr 1_{[0,s]} (r) \frac{\partial}{\partial s}K(s,r) \int_r^t b(s')  \frac{\partial}{\partial s'}K(s',r) \,ds' \\ \
	&=&\int_0^t ds\, b(s) \int_0^t \,ds' b(s') \int_0^{s'}1_{[0,s]} (r) \frac{\partial}{\partial s}K(s,r)  \frac{\partial}{\partial s'}K(s',r) \,dr \\ \
	&=&\int_0^t \,ds  \int_0^t ds' \, b(s) b(s') \int_0^{s \wedge s'}  \frac{\partial}{\partial s}K(s,r) \frac{\partial}{\partial s'}K(s',r) \,dr\\ \
	&=&\int_0^t \,ds  \int_0^t ds' \, b(s) b(s') \Phi(s,s')\\ \
	&=&\int_0^t \,ds  \int_0^s ds' \, b(s) b(s') \Phi(s,s')+\int_0^t \,ds  \int_s^t ds' \, b(s) b(s') \Phi(s,s') \\ \
	&=&2 \int_0^t \,ds  \int_0^s ds' \, b(s) b(s') \Phi(s,s'),
\end{eqnarray*}
where $\Phi(s,s')=\int_0^{s \wedge s'}  \frac{\partial}{\partial s}K(s,r) \frac{\partial}{\partial s'}K(s',r) \,dr.$
Since  $ \frac{\partial}{\partial s} K(s,r)=C_{H}r^{1/2-H} (s-r)^{H-3/2}s^{H-1/2},$ using (5.7) in \cite{N} we obtain
\begin{eqnarray*}
\Phi(s,s')= C_{H}^{2}(ss')^{H-1/2} \int_0^{s \wedge s'} r^{1-2H}(s-r)^{H-3/2}(s'-r)^{H-3/2} \,dr
= H(2H-1)(s-s')^{2H-2}
\end{eqnarray*}
for $s'<s,$  hence
\begin{align}\nonumber
\lefteqn{\int_0^t\left(\int_r^t b(s)  \frac{\partial}{\partial s}K(s,r) \,ds\right)^2 \,dr} \\ \nonumber
&\leq \ 2H(2H-1)\int_0^t \,ds\int_0^s
|b(s)b(s')|(s-s')^{2H-2} \,ds' \\ \nonumber
&\leq\ H(2H-1)\left[
\int_0^t  b(s)^2 \int_0^s (s-s')^{2H-2} \,ds'
\,ds
+ \int_0^t \int_0^s b(s')^2 (s-s')^{2H-2}\,ds'
\,ds
\right] \\ \nonumber
&=\ H \int_0^t  b(s)^2  s^{2H-1}\,ds + H(2H-1) \int_0^t  b(s')^2 \int_{s'}^t (s-s')^{2H-2} \,ds\, ds' \\ \nonumber
&=\ H \int_0^t  b(s)^2  (s^{2H-1}+(t-s)^{2H-1}) \,ds\\ \label{R}
& \leq \ 2H t^{2H-1} \int_0^t  b(s)^2   \,ds.
\end{align}
From the above inequalities we obtain
\begin{equation}
\sup_{t\in \lbrack 0,T]}\int_{0}^{T}|D_{r}X_{t}|^{2}dr
\leq 2\beta ^{2}\int_{0}^{T}a^{2}(r)dr
+4\beta ^{2}HT^{2H-1}\int_{0}^{T}b^{2}(u)du:=M(T).
\label{M2}
\end{equation}
Now, from (\ref{tau*})
\begin{eqnarray}\nonumber
\mathbb{P}(\tau ^{\ast }\leqq T)&=&\mathbb{P}\left\{ \int_{0}^{T}\exp
[-\beta (\lambda _{0}K(t)+A(t))+\beta N_{t}]\,dt \geqq  \frac{1}{C\beta}\langle \varphi ,\phi _{0}\rangle _{D}^{-\beta} \right\} \\ \label{X}
&=&\mathbb{P}\left\{ \int_{0}^{T}\exp[X(t)] \,dt \geq x  \right\},\end{eqnarray} where $x= \frac{1}{C\beta}\langle \varphi ,\phi _{0}\rangle _{D}^{-\beta} .$ The result follows from  \eqref{Dung} and \eqref{M2}.\end{proof}

{\color{black}
In the following theorem we obtain upper bounds for the tail of $\tau^*$ in the case {\color{black} when the Brownian motion $B$} and the fractional Brownian  motion $B^H$  have general dependence structure.

\begin{theorem}\label{THM3}
Assume \eqref{L} and let $g(z) \geq Cz^{1+\beta }$ for all $z>0$, where $C>0$, $\beta >0$, are given constants.
\begin{enumerate}
\item Assume that $B_{t}^{H}=\int_{0}^{t}{K^{H}(t,s)\,dW_{s}},$ where $W$ is a Brownian motion defined in the same probability space, and adapted to the same filtration as the Brownian motion $B$.
	  Then
	\begin{eqnarray*}\lefteqn{
	\PP(\tau^*\le T)} \\
	 &\le & C\beta \langle {%
				\varphi ,\phi _{0}\rangle}_{D}^{\beta }
\int_0^T\LL[
			e^{-\beta \lambda_0 \int_0^t k^2(s)\,ds + 2\beta^2\int_0^ta^2(s)\,ds}
			+
			e^{ - \beta \int_0^t a^2(s)\,ds
				+4\beta^2Ht^{2H-1}\int_0^tb^2(s)\,ds}
			\RR]\,dt.
	\end{eqnarray*}

	\item If $B$ and $B^H$ are independent, then {\color{black}
	$$\PP(\tau^*\le T) \ \le \ { C\beta \langle {%
			\varphi ,\phi _{0}\rangle}_{D}^{\beta }}
					\int_0^T
			e^{-\beta\lambda_0K(t)
				+\frac{\beta^2-\beta}{2}\int_0^ta^2(s)\,ds
				+ \beta^2H t^{2H-1}\int_0^tb^2(s)\,ds}.
			$$
	}
\end{enumerate}
\end{theorem}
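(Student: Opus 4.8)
The plan is to estimate $\PP(\tau^*\le T)$ by first using Markov's inequality on the representation \eqref{X}, namely $\PP(\tau^*\le T)=\PP\{\int_0^T e^{X_t}\,dt\ge x\}$ with $x=\frac{1}{C\beta}\langle\varphi,\phi_0\rangle_D^{-\beta}$, and then controlling the expectation $\EE[\int_0^T e^{X_t}\,dt]=\int_0^T \EE[e^{X_t}]\,dt$ by a direct Laplace-transform (Gaussian moment-generating function) computation. The key observation is that for each fixed $t$, the random variable $X_t=-\beta(\lambda_0 K(t)+A(t))+\beta N_t$ is Gaussian, so $\EE[e^{X_t}]=\exp(\EE[X_t]+\tfrac12\mathrm{Var}(X_t))$. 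The deterministic drift contributes the factors $e^{-\beta\lambda_0 K(t)}$ and $e^{-\beta A(t)}=e^{-\frac{\beta}{2}\int_0^t a^2}$, so the whole problem reduces to computing $\mathrm{Var}(N_t)$ under each of the three dependence structures.

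For part 1, where $B^H_t=\int_0^t K^H(t,s)\,dW_s$ with $W$ and $B$ both Brownian motions on the same filtration but of unknown joint law, I would write $\beta N_t=\beta\int_0^t a(s)\,dB_s+\beta\int_0^t\int_s^t b(r)\frac{\partial}{\partial r}K^H(r,s)\,dr\,dW_s$ using the same interchange of integration as in \eqref{Rep}. The variance of a sum is bounded by $\mathrm{Var}(N_t)\le 2\,\mathrm{Var}(\text{first term})+2\,\mathrm{Var}(\text{second term})$ (Cauchy--Schwarz / the elementary $(a+b)^2\le 2a^2+2b^2$), which is exactly what produces the factor $2\beta^2$ rather than $\tfrac{\beta^2}{2}$. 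The first variance is $\int_0^t a^2(s)\,ds$ and the second is bounded, via the computation \eqref{R} already carried out in the proof of Theorem \ref{THM2}, by $2Ht^{2H-1}\int_0^t b^2(s)\,ds$. Rather than keeping the cross term, which cannot be signed without knowing the dependence, I would split the exponential of the sum into two exponentials using $e^{u+w}\le e^{2u}+e^{2w}$ (valid since $e^{u+w}\le\max\{e^{2u},e^{2w}\}\le e^{2u}+e^{2w}$), giving the two separate summands in the stated bound, each carrying its own doubled variance $2\beta^2\int_0^t a^2$ and $4\beta^2 Ht^{2H-1}\int_0^t b^2$.

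For part 2, the independence of $B$ and $B^H$ means $N_t=\int_0^t a\,dB+\int_0^t b\,dB^H$ is a sum of two independent Gaussians, so $\mathrm{Var}(N_t)=\mathrm{Var}(\int_0^t a\,dB)+\mathrm{Var}(\int_0^t b\,dB^H)=\int_0^t a^2(s)\,ds+\mathrm{Var}(\int_0^t b\,dB^H)$, with no cross term. The fractional part satisfies the same bound $\mathrm{Var}(\int_0^t b\,dB^H)\le 2Ht^{2H-1}\int_0^t b^2(s)\,ds$ from \eqref{R}. Then $\EE[e^{X_t}]=e^{-\beta\lambda_0 K(t)}e^{-\beta A(t)}e^{\frac{\beta^2}{2}\mathrm{Var}(N_t)}$, and collecting the $-\beta A(t)=-\frac{\beta}{2}\int_0^t a^2$ drift with the $\frac{\beta^2}{2}\int_0^t a^2$ Brownian variance yields the clean coefficient $\frac{\beta^2-\beta}{2}\int_0^t a^2(s)\,ds$, while the fractional variance gives $\beta^2 H t^{2H-1}\int_0^t b^2(s)\,ds$.

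I expect the main obstacle to be handling the cross term in part 1. Because $B$ and $W$ have arbitrary joint distribution, the covariance between the two stochastic integrals is genuinely unknown and can have either sign, so one cannot simply drop it; the trick is to avoid computing $\mathrm{Var}(N_t)$ exactly and instead bound the single exponential $e^{X_t}$ by a sum of two exponentials whose exponents each involve only one of the two noises, at the cost of doubling the variance constants. Making this splitting precise—justifying the interchange of $dr$ and $dW_s$ integration to obtain the Wiener-integral representation, invoking the Gaussianity of each piece, and correctly tracking the factor-of-two degradation in the variance constants—is where care is needed, whereas the independent case in part 2 and the final application of Markov's inequality $\PP\{\int_0^T e^{X_t}\,dt\ge x\}\le x^{-1}\int_0^T\EE[e^{X_t}]\,dt=C\beta\langle\varphi,\phi_0\rangle_D^\beta\int_0^T\EE[e^{X_t}]\,dt$ are routine.
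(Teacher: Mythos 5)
Your proposal is correct and follows essentially the same route as the paper: both arguments decouple the two noises at the cost of doubling the exponents, then use the Gaussian moment generating function together with the kernel estimate \eqref{R} to bound $\mathbb{E}\bigl[e^{2\beta\int_0^t a\,dB}\bigr]$ and $\mathbb{E}\bigl[e^{2\beta\int_0^t b\,dB^H}\bigr]$, and finish with Markov's inequality (and, in part 2, factor the expectation by independence to get the undoubled coefficients). The only cosmetic difference is where the splitting happens in part 1 — you use the pointwise inequality $e^{u+w}\le e^{2u}+e^{2w}$ inside the expectation and apply Markov once, while the paper applies Cauchy--Schwarz to the $dt$-integral and a union bound before applying Chebyshev to each factor — and both yield the identical final bound.
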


\begin{proof}
\begin{enumerate}
	\item Using Hölder's and Chebishev's inequalities we obtain
	{\color{black}
	\begin{eqnarray}\nonumber
		\PP(\tau^*\le T) &=& \PP\LL[
		\int_0^T
		e^{-\beta\lambda_0K(t)
			+\beta\int_0^ta(s)\,dB_s -\beta A(t)+\beta \int_0^tb(s)\,dB^H_s}
		\,dt\ge  \frac{1}{C\beta }\langle {%
			\varphi ,\phi _{0}\rangle}_{D}^{-\beta }
		\RR]
		\\ \nonumber
		&\le &
		\PP\LL[\LL(
		\int_0^T
		e^{-2\beta\lambda_0K(t)
			+2\beta\int_0^ta(s)\,dB_s }
		\,dt\RR)^{\frac{1}{2}} \right. \\ \nonumber &&\phantom{MMMM}\times\left.
		\LL(
		\int_0^T
		e^{ -2\beta A(t)+2 \beta \int_0^tb(s)\,dB^H_s}
		\,dt\RR)^{\frac{1}{2}}
		\ge  \frac{1}{C\beta }\langle {%
			\varphi ,\phi _{0}\rangle}_{D}^{-\beta }
		\RR]
		\\ \nonumber
		&\le &
		\PP\LL[
		\int_0^T
		e^{-2\beta\lambda_0K(t)
			+2\beta\int_0^ta(s)\,dB_s }
		\,dt\ge  \frac{1}{C\beta }\langle {%
			\varphi ,\phi _{0}\rangle}_{D}^{-\beta }\RR] \\ \nonumber &&
		+
		\PP\LL[
		\int_0^T
		e^{-2\beta A(t)+ 2\beta \int_0^tb(s)\,dB^H_s}
		\,dt\ge  \frac{1}{C\beta }\langle {%
			\varphi ,\phi _{0}\rangle}_{D}^{-\beta }
		\RR]
			\\ \nonumber
		&\le & \frac{
			\EE\LL[
			\int_0^T
			e^{-2\beta\lambda_0K(t)
				+2\beta\int_0^ta(s)\,dB_s }
			\,dt\RR]
			+
			\EE\LL[
			\int_0^T
			e^{-2\beta A(t)+ 2\beta\int_0^tb(s)\,dB^H_s}
			\,dt
			\RR]}{ \frac{1}{C\beta }\langle {%
				\varphi ,\phi _{0}\rangle}_{D}^{-\beta }}\\ \nonumber
					 				&\le &\frac{\int_0^T
			\left[e^{-2\beta\lambda_0K(t)
				+2\beta^2\int_0^ta^2(s)\,ds}\right]\,dt
			+
			\int_0^T
			e^{-2\beta A(t)}
			\EE\LL[e^{ 2\beta\int_0^t b(s)\,dB^H_s}\RR] \,dt
				}{ \frac{1}{C\beta }\langle {%
				\varphi ,\phi _{0}\rangle}_{D}^{-\beta }},\\ \label{B1B2B3}
		\end{eqnarray}			
}
	where  we have used   the fact that $\EE\LL(\exp\LL\{\int_0^tf(s)\,dB(s)\RR\}\RR)=
\exp\LL\{\frac{1}{2}\int_0^tf^2(s)\,ds\RR\}$ to obtain the last inequality.
{\color{black} In addition,
$$
\EE\LL[e^{ 2\beta\int_0^t b(s)\,dB^H_s}\RR]
=
\EE\LL[e^{ 2\beta\int_0^t\int_s^t b(r)\frac{\partial}{\partial r}K^H(r,s)\,dr\,dW_s}\RR]  =
e^{ 2\beta^2\int_0^t \LL[\int_s^t b(r)\frac{\partial}{\partial r}K^H(r,s)\,dr\RR]^2\,ds},
$$
where  the last equality follows from \cite[Theorem 4.12]{Klebaner}. Therefore, using \eqref{R} we get
\begin{equation}\label{B3B3B3}
\EE\LL[e^{ 2\beta\int_0^t b(s)\,dB^H_s}\RR]\le \exp\LL\{
4\beta^2Ht^{2H-1}\int_0^tb^2(s)\,ds
\RR\}.
\end{equation}
Substituting \eqref{B3B3B3} into \eqref{B1B2B3} we obtain the desired bound.
}
	\item  Using Chebishev's inequality,  the independence of $B$ and $B^H$
	{\color{black}
	and the proof of \eqref{B3B3B3},
	\begin{eqnarray*}\lefteqn{
		\PP(\tau^*\le T)} \\ &=& \PP\LL[
		\int_0^T
		e^{-\beta\lambda_0K(t)
			+\beta\int_0^ta(s)\,dB_s -\beta A(t)+\beta \int_0^t b(s)\, dB^H_s)}
		\,dt\ge  \frac{1}{C\beta }\langle {%
			\varphi ,\phi _{0}\rangle}_{D}^{-\beta }
		\RR]
		\\
		&\le & C\beta \langle {%
				\varphi ,\phi _{0}\rangle}_{D}^{\beta }
			\int_0^T
		\EE\LL[e^{-\beta\lambda_0K(t)
				+\beta\int_0^t a(s)\,dB_s}\RR]
			\EE\LL[e^{	-\beta A(t)+ \beta \int_0^t b(s)\, dB^H_s }\RR]
			\,dt \\
		& \le & C\beta \langle {%
				\varphi ,\phi _{0}\rangle}_{D}^{\beta }
			\int_0^T
			\exp\LL\{
				-\beta\lambda_0K(t)
				+\frac{\beta^2-\beta}{2}\int_0^ta^2(s)\,ds
				+
				 \beta^2Ht^{2H-1}\int_0^tb^2(s)\,ds
\RR\}
			\,dt.
	\end{eqnarray*}
	}
\end{enumerate}
\end{proof}
}

\section{Lower bounds for the blowup time and for the probability of finite
time blowup}
\subsection{A lower bound for the probability of finite
time blowup}
In the following theorem we give a lower bound for the probability of finite
time blow up of the {\color{black} weak} solution of (\ref{2.1}). If $f,g$ are nonnegative
functions and $c$ is a constant, we write $f(t)\sim cg(t)$ as $t\to\infty$
if $\lim_{t\to\infty}f(t)/g(t)=c$.
{\color{black}
\begin{theorem}\label{THM4} {\color{black} Assume \eqref{L} and \eqref{FBM}.
		 Let
 $g(z)\geq Cz^{1+\beta }$
}
and
\begin{equation*}
\int_{0}^{t}a^{2}(r)\,dr\sim C_{1}t^{2l},\quad \int_{0}^{t}b^{2}(r)\,dr\sim C_{2}t^{2m},\quad
 \int_{0}^{t}k^{2}(r)\,dr\sim C_{3}t^{2p}
\end{equation*}%
as $t\rightarrow \infty $ for some  nonnegative constants $l,\,m,\, p$ and positive
constants $C,$ $\beta$, $C_{1}, C_2$ and $C_{3}.$ Suppose additionally that
\begin{enumerate}{\color{black}
\item if $\beta \in (0,1/2),$ then $\max \{p,l\}>H+m-\frac{1}{2},
$
\item if $\beta =1/2,$ then H+$m-\frac{1}{2}<p,$

\item if $\beta >1/2,$
then $p>\max \{l,H+m-\frac{1}{2}\}.$
}
\end{enumerate}
Under these assumptions the solution
of (\ref{2.1}) blows up in finite time with positive probability. Moreover,
\begin{equation}
\mathbb{P}(\tau <\infty )\ \geqq \ \mathbb{P}(\tau ^{\ast }<\infty )\ \geqq
\ 1-\exp \left( -\frac{(m_{\xi }-1)^{2}}{2L_{\xi }}\right) ,  \label{DV}
\end{equation}%
where 
\begin{equation}
\xi =\frac{1}{C\beta }\langle \varphi ,\phi _{0}\rangle _{D}^{-\beta },\quad
L_{\xi }=\underset{t\geqq 0}{\sup }\frac{M(t)}{(\ln (\xi +1)+f(t))^{2}},
\end{equation}%
with $f(t)=t^{\max \{H+m-1/2,\,l\}}$ and
\begin{equation}
m_{\xi }=E\left[ \underset{t\geqq 0}{\sup }\frac{\ln \left( \int_{0}^{t}\exp
	\left( -\beta (\lambda _{0}K(s)+A(s))+\beta N_{s}\right) \,ds+1\right) +f(t)%
}{\ln (\xi +1)+f(t)}\right] .  \label{DIII}
\end{equation}
\end{theorem}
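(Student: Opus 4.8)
The plan is to bound $\mathbb{P}(\tau^*<\infty)$ from below and then invoke $\tau\le\tau^*$ from Theorem \ref{THM1}, which gives $\mathbb{P}(\tau<\infty)\ge\mathbb{P}(\tau^*<\infty)$ at once. Writing $X_t=-\beta(\lambda_0K(t)+A(t))+\beta N_t$ as in \eqref{Rep} and putting $J(t)=\int_0^t e^{X_s}\,ds$, the definition \eqref{tau*} says that $\tau^*$ is the first time $J$ reaches the level $\xi=\frac{1}{C\beta}\langle\varphi,\phi_0\rangle_D^{-\beta}$; since $J$ is continuous and strictly increasing, $\{\tau^*=\infty\}=\{J(t)\le\xi\ \text{for all }t\ge0\}$. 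First I would introduce, with $f(t)=t^{\max\{H+m-1/2,\,l\}}$, the functional
\[
Z:=\sup_{t\ge0}\tilde Y_t,\qquad \tilde Y_t:=\frac{\ln\bigl(J(t)+1\bigr)+f(t)}{\ln(\xi+1)+f(t)} .
\]
Because $\ln(\xi+1)+f(t)>0$ and $\ln(J(t)+1)\ge0$, one has $\tilde Y_t\ge f(t)/(\ln(\xi+1)+f(t))$, with the right side increasing to $1$, so $Z\ge1$ always; on the other hand $Z\le1$ holds exactly when $\ln(J(t)+1)\le\ln(\xi+1)$ for every $t$, i.e.\ when $J(t)\le\xi$ for every $t$. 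Hence $\{\tau^*=\infty\}=\{Z\le1\}$, so that with $m_\xi=\mathbb{E}[Z]$ as in \eqref{DIII} the bound \eqref{DV} is equivalent to the one-sided concentration estimate $\mathbb{P}(Z\le1)\le\exp\bigl(-(m_\xi-1)^2/(2L_\xi)\bigr)$.

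This estimate is precisely the conclusion of the concentration inequality for suprema of exponential functionals of a continuous isonormal Gaussian process of N.T.~Dung \cite{DII}, and I would verify its hypotheses. For the Malliavin-derivative bound, fix $t$ and set $Y_t=\ln(J(t)+1)$; by the chain rule $D_rY_t=(J(t)+1)^{-1}\int_0^t e^{X_s}D_rX_s\,ds$, which is an average of $s\mapsto D_rX_s$ against the sub-probability measure $e^{X_s}\,ds/(J(t)+1)$ on $[0,t]$. Minkowski's inequality in $L^2(dr)$ together with the bound $\int_0^\infty|D_rX_s|^2\,dr\le M(s)\le M(t)$ obtained in \eqref{R}--\eqref{M2} then yields $\int_0^\infty|D_rY_t|^2\,dr\le M(t)$, hence $\int_0^\infty|D_r\tilde Y_t|^2\,dr\le M(t)/(\ln(\xi+1)+f(t))^2\le L_\xi$. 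Since the Malliavin derivative of a supremum of functionals inherits a uniform bound valid for all members (approximating the supremum by finite maxima, on which $D\max$ selects one of the two derivatives), $Z$ satisfies $\int_0^\infty|D_rZ|^2\,dr\le L_\xi$ almost surely. That $m_\xi>1$ I would get from $Z\ge 1+(\tilde Y_T-1)^+$ for every fixed $T$, giving
\[
m_\xi-1\ \ge\ \frac{1}{\ln(\xi+1)+f(T)}\,\mathbb{E}\Bigl[\bigl(\ln\tfrac{J(T)+1}{\xi+1}\bigr)^+\Bigr]\ >\ 0,
\]
because $\mathbb{P}(J(T)>\xi)>0$ for every $T>0$ (the Gaussian process $N$ is non-degenerate on $[0,T]$).

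It remains to secure $0<L_\xi<\infty$ and $m_\xi<\infty$, and this is where the three case hypotheses on $(\beta,l,m,p)$ enter. Since $M(t)\sim 2\beta^2C_1t^{2l}+4\beta^2HC_2\,t^{2(H+m-1/2)}$ is of the same order $t^{2\max\{l,H+m-1/2\}}$ as $f(t)^2$, the ratio $M(t)/(\ln(\xi+1)+f(t))^2$ stays bounded and has a finite positive limit, so $L_\xi\in(0,\infty)$ unconditionally. For $m_\xi<\infty$ I would control $\mathbb{E}[Z]$ through the second exponential moment of the integrand: using $N_t=\int_0^t[a(s)+\int_s^tb(r)\partial_rK^H(r,s)\,dr]\,dB_s$ one has
\[
\mathbb{E}\bigl[e^{2X_t}\bigr]=\exp\bigl(-2\beta(\lambda_0K(t)+A(t))+2\beta^2\,\mathrm{Var}(N_t)\bigr),
\]
whose exponent has leading terms $-\beta\lambda_0C_3\,t^{2p}$, $C_1\beta(2\beta-1)\,t^{2l}$ and $+c\,\beta^2t^{2(H+m-1/2)}$, plus an intermediate cross term of order $t^{\,l+H+m-1/2}$ coming from $W=B$ in \eqref{FBM}. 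The coefficient of $t^{2l}$ changes sign at $\beta=1/2$, and this is exactly why the hypotheses split into the cases $\beta<1/2$, $\beta=1/2$, $\beta>1/2$: in each case the stated inequality forces the negative $t^{2p}$ term (and, when $\beta\le1/2$, the $t^{2l}$ term) to dominate the positive fractional contribution, so that $\int_0^\infty\mathbb{E}[e^{2X_t}]\,dt<\infty$. This integrability is the hypothesis under which Dung's theorem applies; it also gives $\mathbb{E}[J_\infty]<\infty$, hence $J_\infty<\infty$ a.s.\ and, via $Z\le 1+\ln(1+J_\infty)/\ln(\xi+1)$ and Jensen's inequality, $m_\xi<\infty$. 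I expect the matching of these growth orders---the drift, $\mathrm{Var}(N_t)$ with its cross term, and $f$---to be the main technical obstacle, as it is what makes the three case conditions sharp. With $1<m_\xi<\infty$ and $0<L_\xi<\infty$, Dung's inequality yields $\mathbb{P}(Z\le1)\le\exp(-(m_\xi-1)^2/(2L_\xi))<1$, whence $\mathbb{P}(\tau<\infty)\ge\mathbb{P}(\tau^*<\infty)=1-\mathbb{P}(Z\le1)>0$; this is \eqref{DV} together with the asserted positivity of the blow-up probability.
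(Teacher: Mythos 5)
Your overall strategy coincides with the paper's: both reduce the claim to the tail estimate $\PP\bigl(\int_0^\infty e^{X_t}\,dt\ge\xi\bigr)$ for $X_t=-\beta(\lambda_0K(t)+A(t))+\beta N_t$, both invoke Dung's concentration inequality \cite[Theorem 3.1]{DII}, and both verify its Malliavin-derivative hypothesis with the same bound $M(t)$ from \eqref{R}--\eqref{M2} and the same choice $f(t)=t^{\max\{l,\,H+m-1/2\}}$. Your re-derivation of the internal mechanism of Dung's result (the functional $Z$, the identity $\{\tau^*=\infty\}=\{Z\le1\}$, the bound on $D_r\ln(J(t)+1)$, and the argument that $m_\xi>1$) is correct but unnecessary if the theorem is simply cited, as the paper does.

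There is, however, a genuine problem in your verification of the integrability hypothesis. Dung's theorem requires $\int_0^\infty\EE[e^{X_t}]\,dt<\infty$ (the \emph{first} exponential moment), whereas you verify $\int_0^\infty\EE[e^{2X_t}]\,dt<\infty$; in this Gaussian setting one can pass from one to the other, but more importantly your leading-order analysis of the exponent does not close under the stated hypotheses. You expand $\mathrm{Var}(N_t)$ exactly and keep the cross term of order $t^{\,l+H+m-1/2}$ coming from $W=B$. In case 2 ($\beta=1/2$) the coefficient of $t^{2l}$ in your exponent, $C_1\beta(2\beta-1)$, vanishes, and the hypothesis only gives $p>H+m-1/2$ with $l$ unrestricted; taking $l$ large (e.g.\ $H+m-1/2=0.1$, $p=0.2$, $l=5$, with $a,b>0$) the cross term is positive of order $t^{\,l+H+m-1/2}\gg t^{2p}$, so $\int_0^\infty\EE[e^{2X_t}]\,dt=\infty$ and your argument for condition (a) and for $m_\xi<\infty$ breaks down. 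The paper avoids this entirely: it bounds $\EE[e^{X_t}]$ by applying $(x+y)^2\le 2x^2+2y^2$ to the integrand of $\mathrm{Var}(N_t)$ together with \eqref{R}, which eliminates the cross term and produces exactly the three-term exponent $-\tfrac{\beta\lambda_0C_3}{2}t^{2p}+\tfrac{(2\beta^2-\beta)C_1}{2}t^{2l}+2\beta^2HC_2t^{2(H+m-1/2)}$ whose sign analysis is precisely the three case conditions. Replacing your second-moment computation by this first-moment bound (and noting that $Z\le 1+J_\infty/\ln(\xi+1)$ then gives $m_\xi<\infty$ directly, with no need for second moments) repairs the argument and brings it into line with the paper's proof.
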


\begin{proof} From (\ref{X})  it follows that $\PP(\tau^* <\infty) =\PP( \int_0^{\infty} e^{X_t} \,dt \geq \xi).$
In order to estimate $\PP( \int_0^{\infty} e^{X_t} \,dt \geq \xi)$ we use \cite[Theorem 3.1]{DII}, with $a=0$ and $ \sigma=1:$
\begin{prop}[\cite{DII}] \label{DII}Assume that the stochastic process $X$ is adapted and satisfies
	
	a) $\int_0^{\infty}Ee^{X_s}\,ds <\infty,$
	
	b) For each $t\geq 0, \, X_t \in D^{1,2},$
	
	c) There exists a function $f:R_+\to R_+$ such that $\lim_{t \to \infty}f(t) = \infty$ and for each $x>0,$
	\begin{eqnarray}
	\text{ \ \ }\underset{t\geqq 0}{\sup }\frac{\sup_{s\in \lbrack
			0,t]}\int_{0}^{t}|D_{r}X_{s}|^{2}dr}{(\ln (x+1)+f(t))^{2}}
	\leq
	L_{x}<\infty  \quad {a.s.}
	\end{eqnarray}
	Then
	$$\PP\left( \int_0^{\infty} e^{X_t} \,dt <x\right) \leq \exp\LL\{-\frac{(m_x-1)^2}{2L_x}\RR\},$$
	where $$m_x= E\left[\sup_{t\geq 0} \frac{\ln(\int_0^t e^{X_s}\,ds +1)+f(t)}{\ln(x+1)+f(t)}\right].$$
\end{prop}

We now verify that conditions a) - c) of the above proposition hold.

For condition a) we have from \eqref{Rep},
{\color{black}
\begin{eqnarray*}\nonumber
\lefteqn{
	\int_{0}^{\infty }\EE \exp [X_{t}]\,dt}\\
&=&	\int_{0}^{\infty }\EE \exp \left[-\frac{\beta \lambda _{0}}{2}\int_0^tk^2(s)\,ds-\frac{\beta}{2}\int_0^ta^2(s)\,ds \right. \\ && \phantom{MMMMM} + \left. \beta\left( \int_0^t  a(s)\,dB_s  +\int_0^t\int_s^t b(r)\frac{\partial}{\partial r}K^H(r,s)\,dr\,dB_s\right)\right] dt\\ \nonumber
&=&	
	\int_{0}^{\infty }\EE \exp \left[-\frac{\beta \lambda _{0}}{2}\int_0^tk^2(s)\,ds-\frac{\beta}{2}\int_0^ta^2(s)\,ds + \beta \int_0^t\left(  a(s) +\int_s^t b(r)\frac{\partial}{\partial r}K^H(r,s)\,dr\right)\,dB_s\right] dt\\
	&=&	
	\int_{0}^{\infty } \exp \left[-\frac{\beta \lambda _{0}}{2}\int_0^tk^2(s)\,ds-\frac{\beta}{2}\int_0^ta^2(s)\,ds + \frac{\beta^2}{2} \int_0^t
		\left(  a(s) +\int_s^t b(r)\frac{\partial}{\partial r}K^H(r,s)\,dr\right)^2\,ds\right] dt,
	\end{eqnarray*}
where, again,  we have used \cite[Theorem 4.12]{Klebaner} to obtain the last equality. Therefore, using \eqref{R},
\begin{eqnarray}
	\nonumber
		\int_{0}^{\infty }\EE \exp [X_{t}]\,dt
	&\le&
\int_{0}^{\infty } \exp \left[-\frac{\beta \lambda _{0}}{2}\int_0^tk^2(s)\,ds-\frac{\beta}{2}\int_0^ta^2(s)\,ds + \frac{\beta^2}{2} \int_0^t
  2a^2(s)\,ds\right. \\ \label{KlKl} & & \phantom{MMMMM}  + \left.
2\beta^2Ht^{2H-1}\int_0^tb^2(s)\,ds
\right] dt.
	\end{eqnarray}
}
The  integral \eqref{KlKl} is finite if and only if the leading power of $t$ in the term
$$
-\frac{\beta \lambda_{0}}{2} \int _0^t k^2(s)\, ds +\frac{2\beta^2 -\beta}{2} \int_0^t a^2(s)\,ds + 2\beta^2Ht^{2H-1}\int_0^tb^2(s)\,ds
$$
has negative coefficient, which follows from our assumptions.

Condition b) is a consequence of  \eqref{M2}.

For condition c) we use the inequality  (\ref {M2}), which implies that for any $x>0$ and any fixed function $f$,
\begin{eqnarray}\label{sup}
\text{ \ \ }\underset{t\geqq 0}{\sup }\frac{\sup_{s\in \lbrack
		0,t]}\int_{0}^{t}|D_{r}X_{s}|^{2}dr}{(\ln (x+1)+f(t))^{2}}
\leq \sup_{t\geq 0} \frac{M(t)}{(\ln (x+1)+f(t))^{2}}.
\end{eqnarray}%
Due to our assumptions, for big $t$,  the leading power of $t$ in the numerator is $ \max \{2l,2H +2m -1\}.$ It follows that
$$\lim_{t\to \infty}\frac{M(t)}{\LL(\ln (x+1)+t^{\max\{l,H+m-1/2 \}}\RR)^{2}} <\infty, $$
and therefore the supremum in (\ref{sup}) is finite. The result follows from Proposition \ref{DII}.
\end{proof}

The cases when $a=0$ (presence only of fractional Brownian motion) or $b=0$
(presence only of Brownian motion), are simpler:

\begin{coro} {\color{black} Under the assumptions in Theorem \ref{THM4},}

\begin{enumerate}
	\item When $a(t)\equiv 0$ and {\color{black} $p>H+m-1/2$} the solution of (\ref{2.1})
	explodes in finite time with positive probability for all $\beta >0$.
	
	\item If $a(t)\equiv 0$ and {\color{black}$p=H+m-1/2$}, the solution of (\ref{2.1})
	explodes in finite time with positive probability for all $\beta >0$
	satisfying {\color{black}$
	\,\beta < \frac{C_3\lambda_0}{4 C_2H} 
	.$
}
	
	\item When $b(t)\equiv0$ and {\color{black}$0<\beta \leq\frac{1}{2}$} the solution of (\ref{2.1}) exhibits
	explosion in finite time with positive probability for all values of $p$ and
	$l$.
	
	\item If $b(t)\equiv0$ and {\color{black}$\beta >1/2,$} the solution of (\ref{2.1}) exhibits
	explosion in finite time with positive probability if $p>l$ or if $p=l$ and {\color{black}$%
	C_{3}\lambda _{0}>C_{1}(2\beta -1)$.}
\end{enumerate}
\end{coro}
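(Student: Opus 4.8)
The plan is to deduce the corollary directly from the proof of Theorem \ref{THM4}, specialised to the two degenerate noise regimes. Recall that that proof produces finite-time blowup with positive probability as soon as the process $X_t$ of \eqref{Rep} verifies the three hypotheses a)--c) of Proposition \ref{DII}, and that the only genuinely binding one is a), the integrability $\int_0^\infty\EE\exp[X_t]\,dt<\infty$. By \eqref{KlKl} this integral is dominated by
\begin{equation*}
\int_0^\infty\exp\LL[-\frac{\beta\lambda_0}{2}\int_0^tk^2(s)\,ds+\frac{2\beta^2-\beta}{2}\int_0^ta^2(s)\,ds+2\beta^2Ht^{2H-1}\int_0^tb^2(s)\,ds\RR]dt,
\end{equation*}
whose convergence is decided solely by the sign of the coefficient of the highest power of $t$ in the exponent.

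Inserting the asymptotics $\int_0^ta^2\sim C_1t^{2l}$, $\int_0^tb^2\sim C_2t^{2m}$ and $\int_0^tk^2\sim C_3t^{2p}$, the exponent behaves as $t\to\infty$ like
\begin{equation*}
-\frac{\beta\lambda_0C_3}{2}\,t^{2p}+\frac{\beta(2\beta-1)}{2}\,C_1\,t^{2l}+2\beta^2HC_2\,t^{2H+2m-1}.
\end{equation*}
Here the $k^2$-term always carries the strictly negative coefficient $-\beta\lambda_0C_3/2$, the $a^2$-term carries $\beta(2\beta-1)C_1/2$, whose sign is that of $2\beta-1$, and the $b^2$-term carries the positive coefficient $2\beta^2HC_2$. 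The corollary reduces to checking, in each degenerate regime, that the dominant coefficient is negative.

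I would then run through the four cases. For $a\equiv0$ the $a^2$-term disappears and only the powers $2p$ and $2H+2m-1$ compete: if $p>H+m-1/2$ the dominant power is $2p$ with negative coefficient for every $\beta>0$ (case 1); if $p=H+m-1/2$ the two powers coincide and the combined coefficient $-\beta\lambda_0C_3/2+2\beta^2HC_2$ is negative exactly when $\beta<C_3\lambda_0/(4C_2H)$ (case 2). For $b\equiv0$ the $b^2$-term disappears and $2p$ competes with $2l$: when $0<\beta\le1/2$ the $a^2$-coefficient is $\le0$ and the $k^2$-coefficient is strictly negative, so the dominant coefficient is negative for all $p,l$ (case 3); when $\beta>1/2$ the $a^2$-coefficient is positive, so one needs $2p$ to dominate, i.e. $p>l$, or, if $p=l$, the combined coefficient $\beta[-\lambda_0C_3+(2\beta-1)C_1]/2$ to be negative, i.e. $C_3\lambda_0>C_1(2\beta-1)$ (case 4).

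The only step demanding care, and the one I expect to be the main obstacle, is re-verifying hypotheses b) and c) of Proposition \ref{DII} in these degenerate settings. Hypothesis b) follows from \eqref{M2} verbatim. For c) the function $f(t)=t^{\max\{l,\,H+m-1/2\}}$ of Theorem \ref{THM4} works whenever that exponent is positive, which always holds when $a\equiv0$ (since $H>1/2$); but in case 3 with $b\equiv0$ and $l=0$ this $f$ is constant, whereas Proposition \ref{DII} requires $f(t)\to\infty$. In that subcase $M(t)$ of \eqref{M2} is bounded, so any slowly increasing $f$, for instance $f(t)=\ln(1+t)$, keeps the supremum in c) finite. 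With a)--c) secured, Proposition \ref{DII} yields $\PP\LL(\int_0^\infty e^{X_t}\,dt<\xi\RR)<1$, whence $\PP(\tau<\infty)\ge\PP(\tau^*<\infty)=\PP\LL(\int_0^\infty e^{X_t}\,dt\ge\xi\RR)>0$ in every case.
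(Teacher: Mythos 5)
Your proposal is correct and follows exactly the route the paper intends: the corollary is left as an immediate consequence of the proof of Theorem \ref{THM4}, where everything reduces to the sign of the leading coefficient in the exponent of the bound \eqref{KlKl}, and your case-by-case sign analysis reproduces precisely the stated conditions. Your extra care with hypothesis c) of Proposition \ref{DII} (replacing $f(t)=t^{\max\{l,H+m-1/2\}}$ by a slowly growing function such as $\ln(1+t)$ when $b\equiv0$ and $l=0$, so that $f\to\infty$ still holds) addresses a detail the paper passes over silently.
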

}

Notice that $m_{\xi}$ given in (\ref{DIII}) satisfies  $m_{\xi }>1$ due to
Theorem 3.1 in \cite{DII}. The formula for $m_{\xi }$ shows interactions
between $\varphi $ and $K$ that have an influence on the lower bound in %
\eqref{DV}. 
Increasing values of $K$ decrease the lower bound in \eqref{DV}. In this
sense high values of $K$ are in favour of absence of finite time blowup.

{\color{black}
\subsection{The case $H>3/4$ and independent $B$ and $B^H$}
}
In order to find more explicit lower bounds for $\mathbb{P}(\tau <+\infty )$%
, we consider in this subsection the case $H\in (3/4,1)$ and
suppose that $B$ and $B^{H}$ are independent and $b(s)=ca(s)$ for all $%
s\geqq 0$, where $c$ is a constant. Then $N_{t}=\int_{0}^{t}a(s)dM_{s}$ with
$M_{s}=B_{s}+cB_{s}^{H}$. By \cite{Ch} $M$ is equivalent to a Brownian
motion $\widetilde{B}$, and therefore $N_{t}$ is equivalent to $\tilde{N}%
_{t}:=\int_{0}^{t}a(s)\,d\widetilde{B}_{s}$. Here equivalence means equality
of the laws of the processes on ($\mathcal{C}[0,T],\mathcal{B}),$ the space
of continous functions defined on $[0,T]$ endowed with the $\sigma -$algebra
generated by the cylinder sets. Furthermore, ($\tilde{N}_{t})_{t\geqq 0}$ is
a continous martingale and therefore a time-changed Brownian motion: $\tilde{%
N}_{t}=\widetilde{B}_{2A(t)}$. \newline

\begin{theorem}
\label{THM5}
{\color{black}Assume \eqref{L}}.
 Let $H \in (3/4,1)$, $B$ and $B^{H}$ be independent and $%
b(s)=ca(s)$ for all $s\geqq 0$, where $c$ is a constant.We assume also that $%
g(z)\geq Cz^{1+\beta}$, that the functions $k$ and $a$ are positive continuous on
$\mathbb{R}_{+}$ and that there exist constants $\eta \in (0,+\infty ]$ and $%
c_{1}>0$ such that
\begin{equation}
\frac{1}{a^{2}(t)}\exp(-\beta \lambda _{0}K(t)) \geq c_{1}\exp \left(-2\beta
\frac{A(t)}{\eta }\right),\quad t\in \mathbb{R}_{+}.  \label{PB}
\end{equation}%
Then
\begin{equation}
\mathbb{P}(\tau <+\infty ) \ \geq \  \mathbb{P}(Z_{\mu }\leq \theta ),
\end{equation}%
where $\tau $ is the blowup time of \eqref{2.1}, $Z_{\mu }$ is a
gamma-distributed random variable with parameter $\mu :=\frac{2}{\beta }(%
\frac{1}{\eta }+\frac{1}{2}),$ $\theta :=\frac{2c_{1}}{\beta ^{2}\xi }$ and $%
\xi :=\frac{1}{C\beta }\langle \varphi ,\phi _{0}\rangle_{D} ^{-\beta }$.
\end{theorem}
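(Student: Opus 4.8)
The plan is to reduce the event $\{\tau<\infty\}$ to the law of an exponential functional of a drifted Brownian motion and then invoke the Dufresne--Yor identity \cite{Y}. First I would use Theorem~\ref{THM1}: on $\{\tau^{*}<\infty\}$ one has $\tau\le\tau^{*}<\infty$, so $\{\tau^{*}<\infty\}\subseteq\{\tau<\infty\}$ up to a null set and it suffices to bound $\PP(\tau^{*}<\infty)$ from below. By the definition \eqref{tau*} of $\tau^{*}$,
\[
\PP(\tau^{*}<\infty)=\PP\LL(\int_{0}^{\infty}\exp[X_{r}]\,dr\ge\xi\RR),\qquad X_{r}=-\beta(\lambda_{0}K(r)+A(r))+\beta N_{r},
\]
with $\xi=\tfrac{1}{C\beta}\langle\varphi,\phi_{0}\rangle_{D}^{-\beta}$.

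Under the present hypotheses ($H\in(3/4,1)$, $B\perp B^{H}$, $b=ca$) the process $N$ has, by \cite{Ch}, the same law as $\tilde N_{t}=\int_{0}^{t}a(s)\,d\widetilde B_{s}=\widetilde B_{2A(t)}$ for a Brownian motion $\widetilde B$; hence the distribution of $\int_{0}^{\infty}e^{X_{r}}\,dr$ is unchanged if $N_{r}$ is replaced by $\widetilde B_{2A(r)}$. I would then carry out the deterministic time change $u=2A(r)$ (so that $A(r)=u/2$, $dr=a^{-2}(r)\,du$, and the continuous increasing function $A$ maps $[0,\infty)$ onto $[0,\infty)$), turning the functional into
\[
\int_{0}^{\infty}\exp\LL[-\beta\lambda_{0}K(r)-\tfrac{\beta}{2}u+\beta\widetilde B_{u}\RR]\frac{1}{a^{2}(r)}\,du.
\]
The key input is assumption \eqref{PB}: with $A(r)=u/2$ it reads $a^{-2}(r)\exp(-\beta\lambda_{0}K(r))\ge c_{1}\exp(-\beta u/\eta)$, so the integrand is pathwise bounded below by $c_{1}\exp[-\beta(\tfrac1\eta+\tfrac12)u+\beta\widetilde B_{u}]$, whence the whole functional is $\ge c_{1}\int_{0}^{\infty}\exp[-\beta\nu u+\beta\widetilde B_{u}]\,du$ with $\nu=\tfrac1\eta+\tfrac12$.

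It remains to identify the law of this drifted exponential functional. By Brownian scaling, setting $s=\beta^{2}u/4$ and $\widehat B_{s}=\tfrac\beta2\widetilde B_{u}$ (a standard Brownian motion) gives
\[
c_{1}\int_{0}^{\infty}e^{-\beta\nu u+\beta\widetilde B_{u}}\,du=\frac{4c_{1}}{\beta^{2}}\int_{0}^{\infty}e^{2\widehat B_{s}-2\mu s}\,ds,\qquad \mu=\frac{2\nu}{\beta}=\frac{2}{\beta}\LL(\frac1\eta+\frac12\RR),
\]
and the Dufresne--Yor identity yields $\int_{0}^{\infty}e^{2\widehat B_{s}-2\mu s}\,ds\overset{d}{=}1/(2Z_{\mu})$ with $Z_{\mu}$ Gamma$(\mu)$-distributed. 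Thus the lower bound has the law of $\tfrac{2c_{1}}{\beta^{2}Z_{\mu}}$, and combining the three steps,
\[
\PP(\tau<\infty)\ge\PP(\tau^{*}<\infty)\ge\PP\LL(\frac{2c_{1}}{\beta^{2}Z_{\mu}}\ge\xi\RR)=\PP(Z_{\mu}\le\theta),\qquad \theta=\frac{2c_{1}}{\beta^{2}\xi}.
\]
The main obstacle I anticipate is not a single hard estimate but assembling the pieces cleanly: justifying the replacement of $N$ by $\widetilde B_{2A(\cdot)}$ for an \emph{infinite-horizon} functional, verifying that the time change $u=2A(r)$ exhausts $[0,\infty)$ (i.e. $A(\infty)=\infty$, so that the Dufresne--Yor integral runs over the full half-line), and tracking the scaling constants so that the gamma parameter comes out exactly as $\mu=\tfrac{2}{\beta}(\tfrac1\eta+\tfrac12)$ and the threshold as $\theta=2c_{1}/(\beta^{2}\xi)$.
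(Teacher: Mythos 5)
Your proposal is correct and follows essentially the same route as the paper: reduce to $\tau^{*}$ via Theorem \ref{THM1}, replace $N$ by the equivalent time-changed Brownian motion $\widetilde B_{2A(\cdot)}$ using \cite{Ch}, apply the hypothesis \eqref{PB} after the substitution $q=2A(r)$, rescale by $q=4s/\beta^{2}$, and invoke the Dufresne--Yor identity, with all constants matching. The only difference is cosmetic (you work with the event $\{\int\ge\xi\}$ while the paper bounds $\PP(\tau^{*}=\infty)$ via the complementary event), and the caveats you flag at the end, notably that $A(\infty)=\infty$ is needed for the time change to exhaust the half-line, are genuine but are also left implicit in the paper's own argument.
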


\begin{proof}
{\color{black} From Theorem \ref{THM1},}
\begin{eqnarray*}
	\PP(\tau^{*}=+\infty)&=&\PP\left( \int_{0}^{t}dr\,\exp \left[-\beta(\lambda _{0}K(r)+A(r))+\beta \tilde{N}_{r}\right]<\xi\mbox{ for all }   t>0 \right)\\
	&=&\PP\left( \int_{0}^{\infty}dr\exp \left[-\beta(\lambda _{0}K(r) + A(r))+\beta \tilde{N}_{r}\right] \leq \xi \right).
\end{eqnarray*}
By the change of variable $q=2A(r)$ we get
\begin{equation*}
\begin{aligned}
\PP(\tau^{*}=+\infty)=\PP\left(\int_{0}^{\infty}dr\,\exp \left[-\beta(\lambda _{0}K(r)+ A(r))+\beta \tilde{B}_{2A(r)}\right] \leq \xi \right)\\
=\PP\left( \int_{0}^{\infty}\frac{dq}{a^{2}(A^{-1}(q/2))}\exp \left[-\beta(\lambda _{0}K(A^{-1}(q/2))+\frac{1}{2}q)+\beta \tilde{B}_{q}\right] \leq \xi\right).
\end{aligned}
\end{equation*}
Applying~\eqref{PB} to $t=A^{-1}(q/2)$ yields
\begin{equation*}
\frac{1}{a^{2}(A^{-1}(q/2))} \exp\left[-\beta(\lambda _{0}K(A^{-1}(q/2))\right] \geq c_{1} \exp\left(-\frac{\beta}{\eta}q\right),\quad q\in\mathbb{R}_{+}.
\end{equation*}
Therefore
\begin{eqnarray*}
	\PP(\tau^{*}=+\infty) &\leq &\PP\left(c_{1}\int_{0}^{\infty} dq \exp\left [-\beta q\left(\frac{1}{\eta}+\frac{1}{2}\right)+\beta \tilde{B}_{q}\right] \leq \xi\right)\\
	&=&\PP\left(\int_{0}^{\infty}dq\, \exp\left[\beta(\tilde{B}_{q}-\tilde{\mu} q)\right] \leq\frac{\xi}{c_{1}}\right),
\end{eqnarray*}
where $\tilde{\mu}:=\frac{1}{\eta}+\frac{1}{2}$. A second change of variable $q=\frac{4s}{\beta^{2}}$ yields
\begin{equation*}
\begin{aligned}
\PP(\tau^{*}=+\infty)
\le \PP\left(\int_{0}^{\infty} ds \,\exp\left[2(\tilde{B}_{s}-\mu s)\right] \leq \frac{\beta^{2}\xi}{4 c_{1}} \right),
\end{aligned}
\end{equation*}
where $\mu:=\tilde{\mu}\frac{2}{\beta}$.
Due to \cite[Corollary 1.2, page 95]{Y},
\begin{equation*}
\int_{0}^{\infty}e^{2(\tilde{B}_{s}-\mu s)}\,ds\overset{\mathcal{L}}{=}\frac{1}{2Z_{\mu}},
\end{equation*}
where $Z_{\mu}$ is a gamma-distributed random variable with parameter $\mu$. Therefore
\begin{equation*}
\begin{aligned}
\PP(\tau=+\infty) \leq\ \PP(\tau^{*}=+\infty) \leq \PP\left(\frac{1}{2Z_{\mu}} \leq \frac{\beta^{2}\xi}{4 c_{1}} \right)=\PP\left(Z_{\mu} \geq \frac{2c_{1}}{\beta^{2}\xi}\right).
\end{aligned}
\end{equation*}
This implies the statement of the theorem.
\end{proof}

\begin{rem}
If $k,a$ and $b$ are constants, a more explicit lower bound for $\mathbb{P}%
(\tau <+\infty )$ is available without the assumption \eqref{PB}. Indeed,
starting with \eqref{tau*}, a straightforward calculation gives a lower
bound in terms of a gamma-distributed random variable $Z$ again, but this
time with parameter $\widehat{\mu }:=(\lambda _{0}k^{2}+a^{2})/(a^{2}\beta ).
$ More precisely,
\begin{equation*}
\mathbb{P}(\tau <\infty )\ \geqq \ \mathbb{P}(\tau ^{\ast }<\infty )\ = \
\mathbb{P}\left(Z_{\widehat{\mu }}\leqq \frac{2C}{a^{2}\beta }\langle
\varphi ,\phi _{0}\rangle_{D} ^{\beta }\right).
\end{equation*}
\end{rem}

\subsection{ A lower bound for the blowup time}

Our next goal is to obtain a lower bound for the blowup time $\tau.$ Since
the proofs of the following results are close to those in \cite{ALP} (where $%
b=0$), we omit them here.

\begin{theorem}
\label{THM7} Let the function $g$ be such that $g(0)=0$, $z\rightarrow g(z)/z
$ is increasing, and $g(z)\leq \Lambda z^{1+\beta}$ for some positive
constant $\Lambda.$ Then $\tau \geq \tau_*$, where
\begin{equation}
\tau _{\ast }=\inf \left\{t>0:\text{ }\int_{0}^{t}\exp (\beta
(N_{r}-A(r)))\left\| U^{D}(r,0)\varphi\right\| _{\infty }^{\beta }dr\geqq
\frac{1}{\Lambda \beta }\right\}.  \label{tau*2}
\end{equation}
Let us define for $0\leqq t<\tau _{\ast },$
\begin{equation*}
J(t)=\left( 1-\Lambda \beta \int_{0}^{t}\exp (\beta (N_{r}-A(r)))\left\|
U^{D}(r,0)\varphi \right\|_{\infty }^{\beta }dr\right) ^{-1/\beta }.
\end{equation*}
Then the solution $u$ of \eqref{2.1} satisfies, for $x\in D,$ $0\leqq t<\tau
_{\ast },$ $\mathbb{P}$-a.s.
\begin{equation}
0\leqq u(x,t)\leqq J(t)\exp (N_{t}-A(t))U^{D}(t,0)\varphi (x).  \label{ubu}
\end{equation}
\end{theorem}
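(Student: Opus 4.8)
The plan is to work entirely with the mild formulation of \eqref{2.2} supplied by Proposition \ref{Proposition2}, writing $w(x,t):=\exp(-A(t))U^{D}(t,0)\varphi(x)$ for the ``free'' part, so that
\[
v(x,t)=w(x,t)+\int_{0}^{t}\exp(-N_{s}-A(t,s))\,U^{D}(t,s)\,g(\exp(N_{s})v(\cdot,s))(x)\,ds,
\]
and to establish the pointwise a priori bound $0\le v(x,t)\le J(t)\,w(x,t)$ on $[0,\tau_{*})$. The estimate \eqref{ubu} is then exactly this bound rewritten for $u=\exp(N_{t})v$, and $\tau\ge\tau_{*}$ follows because the right-hand side stays finite before $\tau_{*}$. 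This adapts the $b=0$ argument of \cite{ALP}; the lower bound $v\ge0$ is already recorded in the Remark following Proposition \ref{Proposition2}, and I will use throughout that $U^{D}(t,s)$ is linear and positivity preserving (its kernel $p^{D}$ is nonnegative).

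First I would record the ODE structure of $J$. With $H(t):=\int_{0}^{t}\exp(\beta(N_{r}-A(r)))\|U^{D}(r,0)\varphi\|_{\infty}^{\beta}\,dr$ one has $J(t)=(1-\Lambda\beta H(t))^{-1/\beta}$, hence $J(0)=1$ and
\[
J'(t)=\Lambda\,\exp(\beta(N_{t}-A(t)))\,\|U^{D}(t,0)\varphi\|_{\infty}^{\beta}\,J(t)^{1+\beta},
\]
which diverges exactly at $t=\tau_{*}$. Next I would convert the hypotheses on $g$ into a linear majorant: since $z\mapsto g(z)/z$ is increasing and $g(z)\le\Lambda z^{1+\beta}$, for every $M>0$ and every $0\le z\le M$ we get $g(z)\le (g(M)/M)\,z\le\Lambda M^{\beta}z$.

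The core is a continuation estimate. Suppose $v(y,s)\le J(s)w(y,s)$ holds for all $y\in D$ and all $s\in[0,t)$ with $t<\tau\wedge\tau_{*}$. Bounding $U^{D}(s,0)\varphi(y)\le\|U^{D}(s,0)\varphi\|_{\infty}$ gives the uniform bound $\exp(N_{s})v(y,s)\le M_{s}:=J(s)\exp(N_{s}-A(s))\|U^{D}(s,0)\varphi\|_{\infty}$, so the linear majorant yields $\exp(-N_{s})g(\exp(N_{s})v(\cdot,s))\le\Lambda M_{s}^{\beta}v(\cdot,s)\le\Lambda M_{s}^{\beta}J(s)w(\cdot,s)=J'(s)\,w(\cdot,s)$, the last equality being just the definition of $M_{s}$, of $w$, and the ODE for $J$. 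Inserting this into the mild equation, pulling the scalar $\exp(-N_{s})$ through $U^{D}(t,s)$, and using the evolution property $U^{D}(t,s)U^{D}(s,0)=U^{D}(t,0)$ together with $A(t,s)+A(s)=A(t)$, each integrand is dominated by $J'(s)\,w(x,t)$; integrating gives $v(x,t)\le w(x,t)+(J(t)-J(0))w(x,t)=J(t)w(x,t)$. Since the bound holds with equality at $t=0$ and is reproduced at $t$ whenever it holds on $[0,t)$, a continuity argument propagates it to all of $[0,\tau\wedge\tau_{*})$.

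Finally I would close by contradiction: if blowup occurred at some $\tau<\tau_{*}$, then on $[0,\tau)$ we would have $\sup_{x}v(x,t)\le J(t)\|w(\cdot,t)\|_{\infty}$, which remains finite as $t\uparrow\tau$ since $J(\tau)<\infty$ for $\tau<\tau_{*}$ and $\|w(\cdot,t)\|_{\infty}\le\|\varphi\|_{\infty}$ by contractivity of $U^{D}$; this contradicts $\limsup_{t\uparrow\tau}\|v(\cdot,t)\|_{\infty}=\limsup_{t\uparrow\tau}\exp(-N_{t})\|u(\cdot,t)\|_{\infty}=+\infty$. Hence $\tau\ge\tau_{*}$. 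I expect the main obstacle to be the rigorous continuation step — ensuring the a priori bound genuinely extends up to $\tau_{*}$, and so forbids earlier blowup, rather than merely on the interval of guaranteed local existence from Theorem \ref{PROP3} — and it is precisely there that the monotonicity of $g(z)/z$, which licenses the sharp linear replacement $\Lambda M_{s}^{\beta}z$ of the nonlinearity, does the essential work.
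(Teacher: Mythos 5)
The paper omits its own proof of this theorem, deferring to the $b=0$ argument of \cite{ALP}, so there is no line-by-line comparison to make; your proposal is the intended route, and every computation checks out: $J(0)=1$ with $J'(s)=\Lambda\exp(\beta(N_s-A(s)))\|U^{D}(s,0)\varphi\|_{\infty}^{\beta}J(s)^{1+\beta}$, the domination $\exp(-N_s)g(\exp(N_s)v(\cdot,s))\le J'(s)w(\cdot,s)$ under the induction hypothesis, the collapse $\exp(-A(t,s))U^{D}(t,s)\left[J'(s)w(\cdot,s)\right]=J'(s)w(\cdot,t)$ via the evolution property and $A(t,s)+A(s)=A(t)$, and the concluding contradiction using that $J$ is increasing and finite at every $\tau<\tau_*$.

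Two points deserve attention. First, the step you yourself flag is genuinely incomplete as written: ``the bound is reproduced at $t$ whenever it holds on $[0,t)$'' shows the set of good times is closed, but not relatively open, so a bare continuity argument cannot carry the bound past the first potential failure time. The standard repair is available here: since $g(z)=z\cdot(g(z)/z)$ is a product of nonnegative nondecreasing functions, $g$ is nondecreasing, hence the solution map $\Psi$ of the mild equation is monotone and positivity preserving; the Picard iterates $v_0=w$, $v_{n+1}=\Psi(v_n)$ increase to the (unique) mild solution, and your computation shows by induction that $v_n\le Jw$ on $[0,\tau_*)$ for every $n$. Equivalently, $J(t)w(x,t)$ is a supersolution of the mild equation and one invokes comparison. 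Either formulation turns your ``continuation estimate'' into a complete proof and also guarantees the solution extends to all of $[0,\tau_*)$, not merely the local existence interval of Theorem \ref{PROP3}. Second, your closing attribution is slightly misplaced: the linear majorant $g(z)\le\Lambda M^{\beta}z$ for $0\le z\le M$ already follows from $g(z)\le\Lambda z^{1+\beta}$ alone (since $z^{1+\beta}\le M^{\beta}z$ on that range); what the monotonicity of $z\mapsto g(z)/z$ actually buys is that $g$ is nondecreasing, which is precisely what legitimizes the monotone iteration or supersolution comparison that closes the one gap in your argument.
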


\begin{rem}
More precisely, the proof of this theorem shows that the mild solution $v$
of \eqref{2.2} satisfies \eqref{ubu}\ without the factor $exp(N_{t}).$ By
Theorem \ref{THM8}, $v$ is also the weak solution of \eqref{2.2}, hence the
weak solution $u(\cdot ,t)=\exp (N_{t})v(\cdot ,t)$ of \eqref{2.1} satisfies %
\eqref{ubu}.
\end{rem}

\begin{coro}
\label{CC4} Assume that%
\begin{equation*}
\Lambda \beta \int_{0}^{\infty }\exp [\beta (N_{r}-A(r))]\left\|
U^{D}(r,0)\varphi \right\|_{\infty }^{\beta }dr<1.
\end{equation*}%
Then the solution $u$ of \eqref{2.1} satisfies \eqref{ubu} $\mathbb{P}$-a.s.
for all $t.$
\end{coro}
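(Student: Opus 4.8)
The plan is to deduce the corollary directly from Theorem \ref{THM7} by showing that the standing hypothesis forces the lower bound $\tau_\ast$ to equal $+\infty$, so that the pointwise estimate \eqref{ubu}, which Theorem \ref{THM7} already guarantees on $[0,\tau_\ast)$, in fact holds on all of $[0,\infty)$.

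First I would record that the integrand $r\mapsto \exp(\beta(N_r-A(r)))\left\|U^D(r,0)\varphi\right\|_\infty^\beta$ is $\mathbb{P}$-a.s. nonnegative (indeed $\exp$ is positive, the norm is nonnegative, and $\varphi\geq 0$). Consequently the map
\[
t\ \longmapsto\ \int_0^t \exp(\beta(N_r-A(r)))\left\|U^D(r,0)\varphi\right\|_\infty^\beta\,dr
\]
is nondecreasing in $t$ and therefore bounded above, for every $t>0$, by its limit as $t\to\infty$. By the assumption of the corollary this limit is strictly smaller than $\frac{1}{\Lambda\beta}$. Hence for every $t>0$ the quantity above stays strictly below $\frac{1}{\Lambda\beta}$, so the defining condition in \eqref{tau*2} is never met and $\tau_\ast=\inf\varnothing=+\infty$ $\mathbb{P}$-a.s.

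Next I would invoke Theorem \ref{THM7}: since $\tau\geq\tau_\ast=+\infty$, the estimate \eqref{ubu} holds for all $0\leq t<\tau_\ast$, which now means for all $t\geq 0$. It remains only to note that the factor $J(t)$ appearing in \eqref{ubu} is finite for every $t$: the bracket defining it satisfies
\[
1-\Lambda\beta\int_0^t \exp(\beta(N_r-A(r)))\left\|U^D(r,0)\varphi\right\|_\infty^\beta\,dr\ \geq\ 1-\Lambda\beta\int_0^\infty \exp(\beta(N_r-A(r)))\left\|U^D(r,0)\varphi\right\|_\infty^\beta\,dr\ >\ 0
\]
by hypothesis, so $J(t)=\bigl(1-\Lambda\beta\int_0^t\cdots\bigr)^{-1/\beta}$ stays bounded, and in fact $J(t)$ increases to the finite limit $\bigl(1-\Lambda\beta\int_0^\infty\cdots\bigr)^{-1/\beta}$ as $t\to\infty$.

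There is no real analytic obstacle here; the content of the corollary is entirely contained in Theorem \ref{THM7}, and the only thing to verify is the elementary monotonicity-and-boundedness argument identifying the hypothesis with $\tau_\ast=+\infty$. The mildly delicate point to state carefully is that the strict inequality in the hypothesis is what guarantees the denominator of $J$ stays bounded away from zero uniformly in $t$, so that the upper bound in \eqref{ubu} remains finite (and hence the solution $u$ does not blow up) for all times.
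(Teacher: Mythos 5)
Your argument is correct and is precisely the intended derivation: the paper omits the proof (referring to \cite{ALP}), but the corollary is evidently meant as an immediate consequence of Theorem \ref{THM7}, obtained exactly as you do by observing that the hypothesis makes the set defining $\tau_\ast$ in \eqref{tau*2} empty, so $\tau_\ast=+\infty$ and \eqref{ubu} holds for all $t$ with $J(t)$ uniformly bounded. Nothing is missing.
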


\begin{rem}
For the special choice of $\varphi =p\psi _{0},$ $p>0,$ the integrals
appearing \ in \eqref{tau*} and \eqref{tau*2} are the same exponential
functionals of $N.$ In fact, $U^{D}(r,0)\psi _{0}=\exp (-\lambda
_{0}K(r))\psi _{0},$ and $\tau _{\ast }$ becomes%
\begin{equation}
\tau _{\ast }=\inf \left\{t>0:\text{ }\int_{0}^{t}\exp \left[\:\:%
\phantom{1^l\!\!\!\!\!\!\!} \beta (N_{r}-\lambda _{0}K(r)-A(r))\right]%
\,dr\geqq \frac{p^{-\beta }}{\Lambda \beta } \left\| \psi _{0}\right\|
_{\infty }^{-\beta }\right\},  \label{taupsy2}
\end{equation}%
whereas
\begin{equation}
\tau ^{\ast }=\inf \left\{t>0:\int_{0}^{t}\exp \left[\:\:\phantom{1^l\!\!\!%
	\!\!\!\!}\beta (N_{r}-\lambda _{0}K(r)-A(r))\right]\,dr\geq \frac{p^{-\beta }%
}{C\beta } \langle \psi_{0} ,\phi _{0}\rangle_{D} ^{-\beta }\right\}.
\label{taupsy1}
\end{equation}
In fact $\tau _{\ast }\leqq \tau ^{\ast }$ if $C\leqq \Lambda ,$ since $%
\langle \psi_{0} ,\phi _{0}\rangle_{D} \leqq \| \psi _{0}\|_{\infty
}\int_{D}\phi _{0}(x)dx=\|\psi_{0}\|_{\infty }. $ In order to apply both
bounds simultaneously, we have to suppose $Cz^{1+\beta }\leqq g(z)\leqq
\Lambda z^{1+\beta },$ $z>0.$ It is therefore of interest to know the law of
the integral appearing in \eqref{taupsy2} and \eqref{taupsy1}$.$ This seems
possible only for $b^{H}=0, $ since, to our best knowledge, the law of
exponential functionals of fractional Brownian motion is still unknown. For
the moment it seems that only estimates of the type of those in Section \ref%
{section.3.1} are available. See also Theorem \ref{THM5} for $H>3/4.$
\end{rem}

\section{A sufficient condition for finite time blowup}

We consider now the mild form of \eqref{2.2} obtained in Proposition \ref%
{Proposition2}, and obtain a sufficient condition for finite time blowup.

\begin{theorem}
\label{THM6} Suppose that $g(z)\geq Cz^{1+\beta}$ and that there exists $%
w^*>0$ such that%
\begin{equation}
\exp (\beta A(w^*))\parallel U^{D}(w^*,0)\varphi \parallel _{\infty
}^{-\beta }<\beta C\int_{0}^{w^*}\exp (\beta N_{s})\,ds\text{ .\label{cond2}}
\end{equation}%
Then for the explosion time $\tau $ of~\eqref{2.1} there holds $\tau \leq
w^*.$
\end{theorem}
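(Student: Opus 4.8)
The plan is to run a pointwise comparison argument on the \emph{mild} form \eqref{ms} of \eqref{2.2}, using the fixed terminal time $w^*$ as the reference time for the evolution family. First I would fix $x\in D$ and apply the operator $U^{D}(w^{*},t)$ to both sides of \eqref{ms}. Using the evolution (Chapman--Kolmogorov) identity $U^{D}(w^{*},t)U^{D}(t,s)=U^{D}(w^{*},s)$ for $s\le t\le w^{*}$, which follows from $K(w^{*},t)+K(t,s)=K(w^{*},s)$ and the semigroup property of $P^{D}$, together with $A(t,s)=A(t)-A(s)$, the function $Q(x,t):=U^{D}(w^{*},t)v(\cdot,t)(x)$ satisfies
\[
Q(x,t)=e^{-A(t)}U^{D}(w^{*},0)\varphi(x)+\int_{0}^{t}e^{-N_{s}-A(t,s)}U^{D}(w^{*},s)g(e^{N_{s}}v(\cdot,s))(x)\,ds .
\]
Differentiating in $t$ and using $A^{\prime}(t)=\tfrac12a^{2}(t)$ gives the non-autonomous relation
\[
\frac{\partial}{\partial t}Q(x,t)=-\tfrac12 a^{2}(t)\,Q(x,t)+e^{-N_{t}}U^{D}(w^{*},t)g(e^{N_{t}}v(\cdot,t))(x).
\]

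Next I would convert this into a scalar differential inequality. Since $v\ge0$ (the Remark after Proposition \ref{Proposition2}) and $g(z)\ge Cz^{1+\beta}$, the last term is bounded below by $Ce^{\beta N_{t}}U^{D}(w^{*},t)[v(\cdot,t)^{1+\beta}](x)$. The key step is Jensen's inequality for the \emph{sub-Markovian} operator $U^{D}(w^{*},t)$: writing its kernel as a measure of total mass $c\le1$, convexity of $z\mapsto z^{1+\beta}$ on the normalized probability measure combined with $c^{-\beta}\ge1$ yields $U^{D}(w^{*},t)[v^{1+\beta}]\ge (U^{D}(w^{*},t)v)^{1+\beta}=Q^{1+\beta}$. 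Hence, for each fixed $x$, $y(t):=Q(x,t)$ is a supersolution of the Bernoulli equation
\[
Y^{\prime}(t)=-\tfrac12 a^{2}(t)\,Y(t)+Ce^{\beta N_{t}}\,Y(t)^{1+\beta},\qquad Y(0)=U^{D}(w^{*},0)\varphi(x),
\]
so $y(t)\ge Y(t)$ as long as both remain finite.

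I would then solve the comparison equation explicitly: the substitution $Z=Y^{-\beta}$ linearizes it to $Z^{\prime}-\tfrac{\beta}{2}a^{2}(t)Z=-\beta Ce^{\beta N_{t}}$, and integrating with the factor $e^{-\beta A(t)}$ gives
\[
Y(t)^{-\beta}=e^{\beta A(t)}\Big(U^{D}(w^{*},0)\varphi(x)^{-\beta}-\beta C\int_{0}^{t}e^{\beta N_{s}-\beta A(s)}\,ds\Big).
\]
Thus $Y$ explodes at the first time the bracket vanishes. Choosing $x$ so that $U^{D}(w^{*},0)\varphi(x)$ is arbitrarily close to $\|U^{D}(w^{*},0)\varphi\|_{\infty}$, and using that $A$ is nondecreasing so that $e^{-\beta A(s)}\ge e^{-\beta A(w^{*})}$ for $s\le w^{*}$, the hypothesis \eqref{cond2} forces
\[
U^{D}(w^{*},0)\varphi(x)^{-\beta}<\beta C\,e^{-\beta A(w^{*})}\int_{0}^{w^{*}}e^{\beta N_{s}}\,ds\ \le\ \beta C\int_{0}^{w^{*}}e^{\beta N_{s}-\beta A(s)}\,ds ,
\]
so the bracket vanishes at some $T_{x}<w^{*}$ and $Y$ (hence $Q(x,\cdot)$) explodes before $w^{*}$.

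Finally, since $U^{D}(w^{*},t)$ is a contraction on $L^{\infty}(D)$ we have $|Q(x,t)|\le\|v(\cdot,t)\|_{\infty}$, so the explosion of $Q(x,\cdot)$ forces $\|v(\cdot,t)\|_{\infty}\to\infty$ before $w^{*}$; by Proposition \ref{PROP1} the transformation $u=e^{N}v$ gives the same blowup time for $u$, whence $\tau\le w^{*}$. Cleanest is to argue by contradiction: assuming $\tau>w^{*}$ makes $Q(x,\cdot)$ bounded on $[0,w^{*}]$, and the comparison $Q(x,\cdot)\ge Y$ then contradicts the finite-time explosion of $Y$. I expect the main obstacle to be the rigorous justification of Jensen's inequality for the defective (sub-Markovian) kernel $U^{D}$ and the careful selection of the near-maximizing point $x$, because the essential supremum defining $\|U^{D}(w^{*},0)\varphi\|_{\infty}$ need not be attained; the monotonicity of $A$ is precisely what bridges the sharper condition produced by the ODE and the slightly cruder hypothesis \eqref{cond2}.
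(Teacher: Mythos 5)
Your proposal is correct and takes essentially the same route as the paper: apply the evolution operator up to the reference time $w^{*}$ to the mild form \eqref{ms}, use Jensen's inequality for the sub-Markovian kernel, and reduce to a Bernoulli-type scalar differential inequality whose solution blows up before $w^{*}$. The only difference is bookkeeping --- the paper absorbs the $a^{2}$-damping into the factor $\exp(-A(t^{\prime},t))$ and derives a contradiction via $\Psi(t)=t^{-\beta}/\beta$, while your version keeps the damping explicit and therefore produces, as a byproduct, the slightly sharper sufficient condition $\|U^{D}(w^{*},0)\varphi\|_{\infty}^{-\beta}<\beta C\int_{0}^{w^{*}}\exp\left(\beta(N_{s}-A(s))\right)ds$ before coarsening it to \eqref{cond2} via the monotonicity of $A$.
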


\begin{rem}
Inequality \eqref{cond2} is understood trajectorywise. Therefore $w^*$ is
random. \eqref{cond2} is harder to satisfy with a small initial condition $%
\varphi $ and with a small value of $C.$ Due to the different
interpretations of the integrals in $N$, the effects on blowup of $B$ and $%
B^{H}$ are different. If $N=0,$ \eqref{cond2} reads $\parallel
U^{D}(w^*,0)\varphi \parallel _{\infty }^{-\beta }<\beta Cw^*$ and in this
case $w^*$ {\color{black}
is deterministic; if in addition  $\varphi
=\psi _{0},$ \eqref{cond2} reads
}
 $\exp(\lambda _{0}\beta K(w^*))\parallel
\psi _{0}\parallel _{\infty }^{-\beta }<\beta Cw^*.$
\end{rem}

\begin{proof}We use the approach in \cite[Lemma 15.6]{Q-S}; see also   \cite{LDP}.  Suppose that $v(x,t),$ $x\in D,$ $t\geq 0,$ is a global
solution of \eqref{2.2}, and let $0<t<t^{\prime }.$ Using the semigroup
property of the evolution system $(U^{D}(t,r))_{0\leqq r<t}$ we obtain
\begin{eqnarray*}
	&&\exp \LL(\:\ph-A(t^{\prime },t)\RR)U^{D}(t^{\prime },t)v(\cdot ,t)(x) \\
	&=&\exp\LL(\:\ph-A(t^{\prime },t)\RR)U^{D}(t^{\prime },t)\left[ \exp
	\LL(\:\ph-A(t)\RR)U^{D}(t,0)\varphi (\cdot )\right] (x) \\
	&&+\exp \LL(\:\ph-A(t^{\prime },t)\RR)U^{D}(t^{\prime },t)\left[ \int_{0}^{t}\exp
	(-N_{r})\exp \LL(\:\ph-A(t,r)\RR)U^{D}(t,r)g(\exp (N_{r})v(\cdot ,r))(x)\,dr\right] (x) \\
	&=&\exp \LL(\:\ph-A(t^{\prime })\RR)U^{D}(t^{\prime },0)\varphi (\cdot )(x) \\
	&&+\int_{0}^{t}\exp (-N_{r})\exp \LL(\:\ph-A(t^{\prime },r)\RR)U^{D}(t^{\prime
	},r)g(\exp (N_{r})v(\cdot ,r))(x)\,dr \\
	&\geqq &\exp \LL(\:\ph-A(t^{\prime })\RR)U^{D}(t^{\prime },0)\varphi (\cdot )(x) \\
	&&+C\int_{0}^{t}\exp (\beta N_{r})\exp \LL(\:\ph-A(t^{\prime },r)\RR)U^{D}(t^{\prime
	},r)v(\cdot ,r)^{1+\beta }(x)\,dr.
\end{eqnarray*}
By Jensen's inequality
\begin{eqnarray*}
	U^{D}(t^{\prime },r)v(\cdot ,r)^{1+\beta }(x) &=&\int_{D}p^{D}(r,x;t^{\prime
	},y)v(y,r)^{1+\beta }\,dy \\
	&\geqq &\left( \int_{D}p^{D}(r,x;t^{\prime },y)v(y,r)\,dy\right) ^{1+\beta
	}=\left(\:\ph U^{D}(t^{\prime },r)v(\cdot ,r)(x)\right) ^{1+\beta }.
\end{eqnarray*}
Therefore%
\begin{equation*}
\exp \LL(\:\ph-A(t^{\prime },t)\RR)U^{D}(t^{\prime },t)v(\cdot ,t)(x)\geqq \exp
\LL(\:\ph-A(t^{\prime })\RR)U^{D}(t^{\prime },0)\varphi (x)
\end{equation*}
\begin{equation}
+C\int_{0}^{t}\exp (\beta N_{r})\left(\:\Ph \exp \LL(\:\ph-A(t^{\prime
},r)\RR)U^{D}(t^{\prime },r)v(\cdot ,r)(x)\right) ^{1+\beta }dr  \label{psy}.
\end{equation}
Let $\psi (t)$ be the last term in \eqref{psy}. Then, from the above inequality,
$$
\psi ^{\prime }(t) =C\exp (\beta N_{t})\left( \:\Ph\exp (-A(t^{\prime
},t))U^{D}(t^{\prime },t)v(\cdot ,t)(x)\right) ^{1+\beta }
\geqq C\exp (\beta N_{t})(\psi (t))^{1+\beta }
$$
Let now $\Psi (t):=$ $\int_{t}^{\infty }dz/z^{1+\beta }=\frac{1}{\beta }%
t^{-\beta },$ $t>0.$ Then%
\begin{equation*}
\frac{d}{dt}\Psi (\psi (t))=-\frac{\psi ^{\prime }(t)}{(\psi (t))^{1+\beta }}%
\leqq -C\exp (\beta N_{t}).
\end{equation*}
Hence%
$$
C\int_{0}^{t^{\prime }}\exp (\beta N_{s})\,ds \leqq \Psi (\psi (0))-\Psi
(\psi (t^{\prime }))
=\int_{\psi (0)}^{\psi (t^{\prime })}dz/z^{1+\beta }<\int_{\exp
	(-A(t^{\prime }))U^{D}(t^{\prime },0)\varphi (\cdot )(x)}^{\infty
}dz/z^{1+\beta }
$$
for all $x\in D$ and all $t^{\prime }>0.$ Therefore
$
\beta C\int_{0}^{t^{\prime }}\exp (\beta N_{s})\,ds\leqq \exp (\beta
A(t^{\prime }))\|U^{D}(t^{\prime },0)\varphi \|_{\infty
}^{-\beta }
$
for all $t^{\prime }>0.$ This contradicts \eqref{cond2}.
\end{proof}

{\color{black}
{\noindent\bf Acknowledgement} \ {\color{black}The authors are grateful to two anonymous referees for their valuable comments, which greatly improved our paper.}
The second- and third-named authors acknowledge the hospitality of Institut \'{E}lie Cartan de Lorraine, where part of this
work was done. The research of the second-named author was  partially supported by CONACyT (Mexico), Grant No. 652255. The fourth-named author would like to express her gratitude to the entire staff of the IECL for their hospitality and strong support during the completion of her Ph.D. dissertation there.
}

\end{document}